\documentclass[12pt]{amsart}
\usepackage{amsmath,amssymb,amsbsy,amsfonts,amsthm,latexsym,
                        amsopn,amstext,amsxtra,euscript,amscd,color}
\usepackage{graphicx}

\usepackage{float} 
\begin{document}
\bibliographystyle{plain}

\newfont{\teneufm}{eufm10}
\newfont{\seveneufm}{eufm7}
\newfont{\fiveeufm}{eufm5}
%
%
\newfam\eufmfam
     \textfont\eufmfam=\teneufm \scriptfont\eufmfam=\seveneufm
     \scriptscriptfont\eufmfam=\fiveeufm
%
%
\def\frak#1{{\fam\eufmfam\relax#1}}
%


\def\bbbr{{\rm I\!R}} 
\def\bbbr{\mathbb{R}}
\def\bbbm{{\rm I\!M}}
\def\bbbm{\mathbb{M}}
\def\bbbn{{\rm I\!N}} 
\def\bbbn{\mathbb{N}}
\def\bbbf{{\rm I\!F}}
\def\bbbf{\mathbb{F}}
\def\bbbh{{\rm I\!H}}
\def\bbbh{\mathbb{H}}
\def\bbbk{{\rm I\!K}}
\def\bbbk{\mathbb{K}}
\def\bbbp{{\rm I\!P}}
\def\bbbp{\mathbb{P}}

\def\bbbone{{\mathchoice {\rm 1\mskip-4mu l} {\rm 1\mskip-4mu l} {\rm
1\mskip-4.5mu l}  {\rm 1\mskip-5mu l}}}
\def\bbbc{{\mathchoice {\setbox0=\hbox{$\displaystyle\rm C$}
\hbox{\hbox to0pt{\kern0.4\wd0\vrule height0.9\ht0\hss}\box0}}
{\setbox0=\hbox{$\textstyle\rm C$}
\hbox{\hbox to0pt{\kern0.4\wd0\vrule height0.9\ht0\hss}\box0}}
{\setbox0=\hbox{$\scriptstyle\rm C$}\hbox{\hbox to0pt{\kern0.4\wd0\vrule
height0.9\ht0\hss}\box0}} {\setbox0=\hbox{$\scriptscriptstyle\rm C$}
\hbox{\hbox to0pt{\kern0.4\wd0\vrule height0.9\ht0\hss}\box0}}}}

\def\bbbq{{\mathchoice {\setbox0=\hbox{$\displaystyle\rm Q$}
\hbox{\raise 0.15\ht0\hbox to0pt{\kern0.4\wd0\vrule height0.8\ht0\hss}\box0}}
{\setbox0=\hbox{$\textstyle\rm Q$}
\hbox{\raise 0.15\ht0\hbox to0pt{\kern0.4\wd0\vrule height0.8\ht0\hss}\box0}}
{\setbox0=\hbox{$\scriptstyle\rm Q$}
\hbox{\raise 0.15\ht0\hbox to0pt{\kern0.4\wd0\vrule height0.7\ht0\hss}\box0}}
{\setbox0=\hbox{$\scriptscriptstyle\rm Q$}
\hbox{\raise 0.15\ht0\hbox to0pt{\kern0.4\wd0\vrule height0.7\ht0\hss}\box0}}}}
\def\bbbq{\mathbb{Q}}

\def\bbbt{{\mathchoice  {\setbox0=\hbox{$\displaystyle\rm T$}\hbox{\hbox
to0pt{\kern0.3\wd0\vrule height0.9\ht0\hss}\box0}}
{\setbox0=\hbox{$\textstyle\rm T$}
\hbox{\hbox to0pt{\kern0.3\wd0\vrule height0.9\ht0\hss}\box0}}
{\setbox0=\hbox{$\scriptstyle\rm T$}
\hbox{\hbox to0pt{\kern0.3\wd0\vrule height0.9\ht0\hss}\box0}}
{\setbox0=\hbox{$\scriptscriptstyle\rm T$} hbox{\hbox to0pt{\kern0.3\wd0\vrule
height0.9\ht0\hss}\box0}}}}
\def\bbbt{\mathbb{T}}

\def\bbbs{{\mathchoice
{\setbox0=\hbox{$\displaystyle     \rm S$}\hbox{\raise0.5\ht0\hbox
to0pt{\kern0.35\wd0\vrule height0.45\ht0\hss}\hbox to0pt{\kern0.55\wd0\vrule
height0.5\ht0\hss}\box0}} {\setbox0=\hbox{$\textstyle        \rm
S$}\hbox{\raise0.5\ht0\hbox to0pt{\kern0.35\wd0\vrule height0.45\ht0\hss}\hbox
to0pt{\kern0.55\wd0\vrule height0.5\ht0\hss}\box0}}
{\setbox0=\hbox{$\scriptstyle      \rm S$}\hbox{\raise0.5\ht0\hbox
to0pt{\kern0.35\wd0\vrule height0.45\ht0\hss}\raise0.05\ht0\hbox
to0pt{\kern0.5\wd0\vrule height0.45\ht0\hss}\box0}}
{\setbox0=\hbox{$\scriptscriptstyle\rm S$}\hbox{\raise0.5\ht0\hbox
to0pt{\kern0.4\wd0\vrule height0.45\ht0\hss}\raise0.05\ht0\hbox
to0pt{\kern0.55\wd0\vrule height0.45\ht0\hss}\box0}}}}
\def\bbbs{\mathbb{S}}

\def\bbbz{{\mathchoice {\hbox{$\sf\textstyle Z\kern-0.4em Z$}}
{\hbox{$\sf\textstyle Z\kern-0.4em Z$}} {\hbox{$\sf\scriptstyle Z\kern-0.3em
Z$}} {\hbox{$\sf\scriptscriptstyle Z\kern-0.2em Z$}}}} \def\ts{\thinspace}
\def\bbbz{\mathbb{Z}}

\newtheorem{theorem}{Theorem} \newtheorem{lemma}[theorem]{Lemma}
\newtheorem{claim}[theorem]{Claim}
\newtheorem{cor}[theorem]{Corollary}
\newtheorem{prop}[theorem]{Proposition}
\newtheorem{definition}{Definition}
\newtheorem{question}[theorem]{Open Question}

\def\qed{\ifmmode
\squareforqed\else{\unskip\nobreak\hfil
\penalty50\hskip1em\null\nobreak\hfil\squareforqed
\parfillskip=0pt\finalhyphendemerits=0\endgraf}\fi}

\def\squareforqed{\hbox{\rlap{$\sqcap$}$\sqcup$}}

\def\cA{{\mathcal A}}
\def\cB{{\mathcal B}}
\def\cC{{\mathcal C}}
\def\cD{{\mathcal D}}
\def\cE{{\mathcal E}}
\def\cF{{\mathcal F}}
\def\cG{{\mathcal G}}
\def\cH{{\mathcal H}}
\def\cI{{\mathcal I}}
\def\cJ{{\mathcal J}}
\def\cK{{\mathcal K}}
\def\cL{{\mathcal L}}
\def\cM{{\mathcal M}}
\def\cN{{\mathcal N}}
\def\cO{{\mathcal O}}
\def\cP{{\mathcal P}}
\def\cQ{{\mathcal Q}}
\def\cR{{\mathcal R}}
\def\cS{{\mathcal S}}
\def\cT{{\mathcal T}}
\def\cU{{\mathcal U}}
\def\cV{{\mathcal V}}
\def\cW{{\mathcal W}}
\def\cX{{\mathcal X}}
\def\cY{{\mathcal Y}}
\def\cZ{{\mathcal Z}}
\newcommand{\rmod}[1]{\: \mbox{mod}\: #1}

\def\Tr{{\mathrm{Tr}}}

\def\mand{\qquad \mbox{and} \qquad} \renewcommand{\vec}[1]{\mathbf{#1}}

\def\HNP{{\mathcal{HNP_{\mu}}}} 

\def\tf{\widetilde f}

\def\eqref#1{(\ref{#1})}


\newcommand{\ignore}[1]{}

\hyphenation{re-pub-lished}

\def\lln{{\mathrm Lnln}}
\def\ad{{\mathrm ad}}

\def\F{{\bbbf}}
\def\K{{\bbbk}}
\def \Z{{\bbbz}}
\def \N{{\bbbn}}
\def\Q{{\bbbq}}
\def \R{{\bbbr}}
\def\Fp{\F_p}
\def\fp{\Fp^*}
\def\Zm{\Z_m}
\def \Um{{\mathcal U}_m}

\def \Bf{\frak B}

\def\Km{\cK_\mu}

\def\va {{\mathbf a}}
\def \vb {{\mathbf b}}
\def \vc {{\mathbf c}}
\def\vx{{\mathbf x}}
\def \vr {{\mathbf r}}
\def \vs {{\mathbf s}}
\def \vv {{\mathbf v}}
\def\vu{{\mathbf u}}
\def \vw{{\mathbf w}}
\def \vz {{\mathbf z}}
\def \ve {{\mathbf e}}
\def \vp {{\mathbf p}}

\def \vol {\mathrm{Vol}}

\def\dist {{\mathrm{\,dist\,}}}

\def\\{\cr}
\def\({\left(}
\def\){\right)}
\def\fl#1{\left\lfloor#1\right\rfloor}
\def\rf#1{\left\lceil#1\right\rceil}

\def\near#1{\left\lfloor #1\right\rceil}
\def\cvp#1{\left\lfloor{\vec{#1}}\right\rceil}

\def\flp#1{{\left\langle#1\right\rangle}_p}
\def\flm#1{{\left\langle#1\right\rangle}_m}

\def\dmod#1#2{\left|#1\right|_{#2}}

\def\Al{{\sl Alice}}
\def\Bob{{\sl Bob}}

\def\Or{{\mathcal O}}

\newcommand{\commR}[1]{\marginpar{%
\vskip-\baselineskip 
\raggedright\footnotesize
\itshape\hrule\smallskip\begin{color}{blue}#1\end{color}\par\smallskip\hrule}}

\newcommand{\commI}[1]{\marginpar{%
\vskip-\baselineskip 
\raggedright\footnotesize
\itshape\hrule\smallskip\begin{color}{red}#1\end{color}\par\smallskip\hrule}}

\def\inv#1{\mbox{\rm{inv}}\,#1} \def\invM#1{\mbox{\rm{inv}}_M\,#1}
\def\invp#1{\mbox{\rm{inv}}_p\,#1}

\def\Ln#1{\mbox{\rm{Ln}}\,#1}

\def \nd {\,|\hspace{-1.2mm}/\,}

\def\ord{\mu}

\def\e {\mbox{\bf{e}}}

\def\epp{\mbox{\bf{e}}_{p-1}}
\def\ep{\mbox{\bf{e}}_p}
\def\eq{\mbox{\bf{e}}_q}

\newcommand{\floor}[1]{\lfloor {#1} \rfloor}

\def\rem{{\mathrm{\,rem\,}}}
\def\distp {{\mathrm{\,dist_p\,}}}
\def\etal{{\it et al.}}
\def\ie{{\it i.e. }}
\def\veps{{\varepsilon}}
\def\eps{{\eta}}

\def\ind#1{{\mathrm {ind}}\,#1}
      \def \MSB{{\mathrm{MSB}}}
\newcommand{\abs}[1]{\left| #1 \right|}

\title[Noisy polynomial interpolation and approximation]{
Interpolation and Approximation 
of Polynomials in Finite Fields
over a Short Interval
from Noisy  Values}

\author[Garcia-Morchon]{Oscar Garcia-Morchon}
\address{Philips Research Laboratories, High Tech Campus 34,
5656 AE Eindhoven, The Netherlands}
\email{oscar.garcia@philips.com}

\author[Rietman]{Ronald Rietman}
\address{Philips Research Laboratories, High Tech Campus 34,
5656 AE Eindhoven, The Netherlands}
\email{ronald.rietman@philips.com}

\author[Shparlinski] {Igor E. Shparlinski} 
\address{Department of Pure Mathematics, University of New South Wales, 
Sydney, NSW 2052, Australia}
\email{igor.shparlinski@unsw.edu.au}

\author[Tolhuizen]{Ludo Tolhuizen}
\address{Philips Research Laboratories, High Tech Campus 34,
5656 AE Eindhoven, The Netherlands}
\email{ludo.tolhuizen@philips.com}

\date{\today}

\begin{abstract}
Motivated by a recently introduced HIMMO key 
distribution scheme, we consider a modification 
of the {\it noisy polynomial interpolation problem\/}
of recovering an unknown polynomial $f(X) \in \Z[X]$ 
from approximate values of the residues of $f(t)$ modulo a 
prime $p$ at polynomially many points $t$ taken from 
a short interval. 
\end{abstract}

\keywords{
Noisy polynomial interpolation,
 finite fields, lattice reduction, HIMMO key distribution scheme}

\maketitle

\section{Introduction}

\subsection{Motivation}

Here,  we consider the following 
problem: 
\begin{quotation}
Given a prime $p$, recover
an unknown polynomial $f$ over a finite field $\F_p$ of $p$ elements 
from several approximations to the values $f(t)$, computed at several 
points $t$. 
\end{quotation}
 
Several problems of this type are related to the so-called
{\it hidden number problem\/} introduced by Boneh and 
Venkatesan~\cite{BV1,BV2}, and have already been 
studied intensively due to their cryptographic relevance, see
the survey~\cite{Shp2}. 

Usually the  evaluation points $t$ are chosen from the whole field $\F_p$.
However,  here, motivated  by the links and possible applications to the
recently introduced HIMMO key distribution scheme~\cite{GRTGGM},
we concentrate on a very different case, which has never been discussed 
in the literature. Namely, in the settings relevant to HIMMO, 
the values of $t$ are taken from a short interval 
rather than from the whole field $\F_p$.
This case requires a careful adaptation  of existing algorithms
of~\cite{Shp1,ShpWint} 
and also establishing new number theoretic results about 
the frequency of small residues of polynomial values 
evaluated at a small argument, which are based on some 
ideas from~\cite{CCGHSZ,CGOS}.

We remark the polynomial recovery problem as studied in this paper arises in 
a collusion attack on a single node in the HIMMO system. 
Collusion attacks on the complete HIMMO system lead to recovery problems 
involving several polynomials reduced modulo several distinct unknown integers, 
which we believe to be much more difficult.


\subsection{Polynomial noisy interpolation and approximation problems}

For a prime $p$ we denote by $\F_p$
the field of $p$ elements. We identify the elements of $\F_p$ with the set
$\{0,\ldots, p-1\}$, so we can talk about their bits 
(for example most and least significant bits) and also 
approximations.

We consider the {\it noisy polynomial  interpolation problem\/} of
finding an unknown polynomial
$$
f(X) = \sum_{j=0}^n a_j X^{j} \in \F_p[X],
$$
of degree $n$ from approximations to values of  $f(t)$ (treated as  elements
of the set $\{0,\ldots, p-1\}$) 
at polynomially many points
$t \in\F_p$
selected uniformly at random.

More precisely, 
for integers $s$ and $m \ge 1$ we denote by  $\dmod{s}{m}$
the distance between $s$ and the closest multiple of $m$, that is,
$$
\dmod{s}{m} = \min_{k \in \Z} |s - km|.
$$
Then, these approximations can, for example,  be given as 
integers $u_t$ with 
\begin{equation}
\label{eq:Approx}
\dmod{u_t - f(t)}{p} \le \Delta
\end{equation}
for some ``precision'' $\Delta$ for values of $t$ that are 
chosen uniformly at random from a certain set $\cT$ of 
``samples''. 

In cryptographic applications, 
the approximations are usually given by strings of $s$ most or 
least significant bits of $f(t)$, where, 
if necessary, some leading zeros are added to make sure that $f(t)$ 
is represented by $r$ bits, where $r$ is the bit length of $p$. 

It is clear that giving the most significant bits is essentially equivalent 
to giving approximations of the type~\eqref{eq:Approx} with an appropriate $\Delta$.
We now observe that giving the least significant bits can be recast 
as a question of this type as well. Indeed, assume we are given some $s$-bit 
integer $v_t$ 
so that 
$$
f(t) = 2^{s}w_t + v_t
$$
for some (unknown) integer $w_t$. We now have $0 \le w_t < 2^{r-s}$
so, if $\lambda$ is the multiplicative inverse of $2^{s}$ in 
$\F_p$ then, setting $u_t = \lambda v_t + 2^{r-s-1}$, we obtain an inequality 
of the type~\eqref{eq:Approx} with the polynomial $\lambda f(X)$ 
instead of $f(X)$ and $\Delta = 2^{r-s-1}$. 

So from now on we concentrate on the case where the 
approximations to $f(t)$ are given in the
form of~\eqref{eq:Approx}. 


The case of linear polynomials
corresponds to the hidden number problem introduced by Boneh and 
Venkatesan~\cite{BV1,BV2}. The case of general polynomials, including 
sparse polynomials of very large degree, has been studied in~\cite{Shp1,ShpWint}
(see also~\cite{Shp2} for a survey of several other problems of 
similar types). We note that if the polynomial $f$ belongs to some 
family $\cF$ of polynomials (such as polynomials of degree $d$ or sparse 
polynomials with $d$ monomials), it is crucial for the algorithms
of~\cite{Shp1,ShpWint} to have some uniformity results for every
non-zero polynomial of $\cF$ on the values of $t$ 
chosen uniformly at random from $\cT$. We also note that a multiplicative 
analogue of this problem has been
studied in~\cite{vzGShp}.

We recall that in the settings of all previous works, typically the set $\cT$ 
is the whole field $\F_p$. In turn, 
this enables the use of such a powerful number theoretic
technique as the method of exponential sums and thus the use
of the Weil bound (see~\cite[Theorem~5.38]{LN}) for dense polynomials 
and the bound 
of Cochrane, Pinner and Rosenhouse~\cite{CPR} for sparse polynomials. 
In particular, in~\cite{ShpWint} a polynomial time algorithm is designed that 
works with very large values of  $\Delta$, namely, 
up to $\Delta = p  \exp\(- c(w
\log p)^{1/2}\)$, provided that $f$ is a sparse polynomial of degree 
$n \le  p^{1/2} \exp\(- c (w \log p)^{1/2}\)$ and with $w$ monomials
(of known degrees), where $c>0$ is some absolute constant. 
The analysis of the algorithm of~\cite{ShpWint}  is
based on the method of exponential sums,  the bound of~\cite{CPR}
and some ideas related to the Waring problem.

We also note that if $\cT$ is an interval of length at least 
$p^{1/n + \varepsilon}$ for some fixed $\varepsilon > 0$, where,
as before $n = \deg f$, then one can use the 
method of~\cite{Shp1,ShpWint} augmented with the bounds of exponential sums
obtained via 
the method of Vinogradov, see the striking results of 
Wooley~\cite{Wool1,Wool2,Wool3}.
However, here we are interested in  shorter intervals of the 
form 
$$
\cT = [-h,h],
$$ 
which appear naturally in the construction of~\cite{GRTGGM}. 
Thus instead of the method of exponential sums, we use some ideas
from~\cite{CCGHSZ,CGOS}.

We remark that for the noisy interpolation to succeed,  
we always assume that some low order the coefficients 
$a_0, \ldots, a_{k-1}$ of $f$ are known, where $k$ is such that 
$h^k > \Delta^{1+\varepsilon}$ for some fixed $\varepsilon > 0$.  
Clearly such a condition is necessary as if $h^k \le  \Delta$ 
then the approximations
of the type~\eqref{eq:Approx} are not likely to 
be sufficient to distinguish between 
$$
f_1(X) = \sum_{j=k}^n a_j X^{j} \mand f_2(X) = \sum_{j=k}^n a_j X^{j} + X^{k-1}
$$
from approximations~\eqref{eq:Approx}  at random $t \in [-h,h]$. 
Furthermore, it is clear that without loss of generality we can always
assume that 
$$
a_0 = \ldots = a_{k-1} = 0.
$$
Note that in the case when the test set $\cT$ is the whole field $\F_p$ 
we only need to request that the constant coefficient $a_0$ of $f$ is known 
(which is also, always assumed to be zero, see~\cite{BV1,BV2,Shp1,ShpWint}). 

The above example, which shows the limits of 
{\it interpolation\/} of $f$ from the information 
given by~\eqref{eq:Approx}, 
motivates the following question of {\it approximation\/} to $f$, which is 
also more relevant to attacking HIMMO. Namely, instead of 
finding a polynomial $f$ we ask whether we can find a polynomial $\tf$,
such that $f(t)$ and $\tf(t)$ are close to each other for 
all $t \in [-h,h]$ so the approximations~\eqref{eq:Approx}  do not 
allow to distinguish between $f$ and $\tf$. 
In the terminology of~\cite{GRTGGM} both $f$ and $\tf$ lead to the same
keys and thus the attacker can use $\tf$ instead of $f$.

Finally, we note that the algorithmic problems considered in this 
paper have led us to some new problems which are of intrinsic
number theoretic interest. 

\subsection{Approach and structure}

Generally, our approach follows that of~\cite{Shp1,ShpWint} which 
in turn is based on the idea of Boneh and 
Venkatesan~\cite{BV1,BV2}.
Thus lattice algorithms, namely the algorithm 
for the closest vector problem, see Section~\ref{sec:Lat CVP}, and 
a link between this problem and polynomial approximations, 
see  Section~\ref{sec:Lap Poly}, play a crucial role in 
our approach. 

However, the analysis of our algorithm requires very 
different tools compared with those used in~\cite{Shp1,ShpWint}.
Namely we need to establish some results about the frequency 
of small polynomial values at small arguments, which we derive in
Section~\ref{sec:PolyVal}, 
closely following some ideas from~\cite{CCGHSZ,CGOS}. 

It also clear that there is a natural limit for such estimates on 
the frequency of small values as any polynomial with 
small coefficients takes small values at small arguments 
(it is certainly easy to quantify the notion of ``small''
in this statement). In fact our bound of Lemma~\ref{lem:NFIJ-1}
is nontrivial up to exactly this limit. 

Rather unexpectently,  in Section~\ref{sec:Except Poly}
we show that there are several other types of polynomials 
which satisfy this property: that is, they take small values at 
small arguments even if the coefficients are quite large
(certainly this may only happen beyond the range of the bound of 
Lemma~\ref{lem:NFIJ-1} which applies to all polynomials). 

We call such polynomials {\it 
exceptional\/}.  Then, after recalling in Section~\ref{sec:Prep} 
some result from analytic number theory, in Section~\ref{sec:Coeff Except} 
we give some results describing 
the structure of the coefficients of exceptional polynomials. 
Note the presence of exceptional polynomials makes noisy polynomial 
interpolation
impossible for a wide class of instances (which is actually good news for 
the security of HIMMO).  So it is important to understand the structure 
and the frequency of exceptional polynomials. The results of Section~\ref{sec:Coeff Except} 
provide some partial progress toward this goal, yet many important 
questions are still widely open. 

Our main results are presented in Sections~\ref{sec:Poly Int} and~\ref{sec:Poly App}.
In Section~\ref{sec:Poly Int} we treat the noisy polynomial 
interpolation problem, where we actually want to recover the hidden 
polynomial $f$. However, from the cryptographic point of view it is 
enough to recover another polynomial $g$ which for all or most of small 
arguments takes values close to those taken by $f$. 
Here the existence 
of exceptional polynomials becomes of primal importance. 
We use the results of Section~\ref{sec:Coeff Except}  to 
obtain an upper bound on the number of such polynomials $g$. 

Furthermore, in Sections~\ref{sec:Flat Poly}
and~\ref{sec:Oscill Poly}
 we also give two interesting explicit constructions of such 
exceptional polynomials, which we illustrate by some 
concrete numerical examples. 

In Section~\ref{sec:Poly App} we treat the approximate recovery problem and
give a formula that predicts whether an approximate recovery is likely to be
successful, depending on the number of randomly chosen
observation points. We compare its
predictions with numerical experiments for parameter values suitable for
HIMMO.

\subsection{Notation}

Throughout the paper, the implied constants in the symbols `$O$',  `$\ll$' 
and `$\gg$'
may occasionally, where obvious, depend on  the degrees of 
the polynomials involved and on the real parameter $\varepsilon$ 
and are absolute, otherwise.
We recall that the notations $U = O(V)$,  $U \ll V$  and $V \gg U$ are all
equivalent to the assertion that the inequality $|U|\le c|V|$ holds for some
constant $c>0$.

The letter $p$ always denotes a prime number, while the letters $h$, $k$,
$\ell$, $m$ 
and $n$ (in lower and upper cases) always denote positive integer numbers.
%

\section{Distribution of Values of Polynomials}

\subsection{Polynomial values in a given  interval} 
\label{sec:PolyVal} 

For a polynomial 
$$
F(X) \in \F_p[X]
$$
and two intervals 
$$\cI = \{u+1, \ldots, u+H\}\mand \cJ = \{v+1, \ldots, v+K\}
$$ 
of $H$ and $K$ consecutive integers, respectively, 
we denote by $N_{F}(\cI, \cJ)$ the number of 
values $t \in \cI$ for which $F(t) \in \cJ$ 
(where the elements of the intervals  $\cI$ and $\cJ$ are embedded into
$\F_p$ under the natural reduction modulo $p$).

For two intervals $\cI$ 
and $\cJ$ of the same length, various bounds on $N_{F}(\cI, \cJ)$ are 
given in~\cite{CCGHSZ,CGOS}. 
It is easy to see that the argument of the proof of~\cite[Theorem~1]{CGOS} allows 
us to  estimate $N_{F}(\cI, \cJ)$ for intervals of $\cI$ and $\cJ$ 
of different lengths as well. 

To present this result, 
for positive integers $k$, $\ell$ and $H$, we denote by
$J_{k,\ell}(H)$ the number of solutions to the system of
equations
$$
x_1^{\nu}+\ldots +x_k^{\nu}=x_{k+1}^{\nu}+\ldots +x_{2k}^{\nu},
\qquad \nu = 1, \ldots, \ell, 
$$
with
$$
1 \le x_1, \ldots, x_{2k} \le H.
$$
Next, we define $\kappa(\ell)$ as the smallest integer $\kappa$ such
that for $k \ge \kappa$ there exists a constant
$C(k,\ell)$ depending only on $k$ and $\ell$ and such that 
$$
J_{k,\ell}(H) \le C(k,\ell) H^{2k - \ell(\ell+1)/2+o(1)}
$$
holds as   $H\to \infty$. Presently, the strongest available upper bound
$$
\kappa(\ell) \le \ell^2-\ell + 1
$$ 
is due to Wooley~\cite{Wool3}, see also~\cite[Theorem~1.1]{Wool2} and~\cite[Theorem~1.1]{Wool1}; 
note that, in particular,
these bounds also imply the existence of $\kappa(\ell)$ which is 
a very nontrivial fact.

It is now  easy to see that the proof of~\cite[Theorem~1]{CGOS} can be
generalised to  lead to the following bound.

\begin{lemma}
\label{lem:NFIJ-1} Let $F  \in \F_p[X]$ be a polynomial of degree $\ell$. 
Then  for 
intervals $\cI = \{u+1, \ldots, u+H\}$ 
and $\cJ = \{v+1, \ldots, v+K\}$ with $1 \le H, K < p$ we have
$$
N_F(\cI,\cJ) \le  H^{1+o(1)} \((K/p)^{1/2\kappa(\ell)} + (K/H^\ell)^{1/2\kappa(\ell)}\),
$$
as $H\to\infty$. 
\end{lemma}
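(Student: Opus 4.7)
The plan is to adapt the exponential-sum approach used to establish~\cite[Theorem~1]{CGOS}, which treats the case $|\cI|=|\cJ|$, so as to accommodate intervals of possibly different lengths. I set $k=\kappa(\ell)$. First, applying the Fourier expansion of the indicator of $\cJ$ modulo $p$, I write
\[
N_F(\cI,\cJ)\;=\;\frac{HK}{p}\;+\;\frac{1}{p}\sum_{0<|a|\le p/2}\hat{\chi}_\cJ(a)\,S_a,
\qquad S_a=\sum_{t\in\cI}e_p(aF(t)),
\]
where $|\hat{\chi}_\cJ(a)|\ll\min(K,p/|a|)$. I then apply H\"older's inequality to the error sum with conjugate exponents $r=2k/(2k-1)$ and $q=2k$; together with the standard estimate $\sum_{a\ne 0}|\hat{\chi}_\cJ(a)|^r\ll pK^{r-1}$, this reduces the problem to bounding the congruence-count
\[
M_k\;=\;\frac{1}{p}\sum_{a}|S_a|^{2k}\;=\;\#\bigl\{\vec t\in\cI^{2k}\,:\,F(t_1)+\cdots+F(t_k)\equiv F(t_{k+1})+\cdots+F(t_{2k})\pmod p\bigr\},
\]
and yields $|N_F(\cI,\cJ)-HK/p|\ll(K\,M_k)^{1/(2k)}$.

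The heart of the argument is therefore the estimate $M_k\ll H^{2k}/p+H^{2k-\ell+o(1)}$. Fixing an integer-coefficient lift $\widetilde F\in\Z[X]$ of $F$ with coefficients of absolute value at most $p$, I split the count according to the integer difference
\[
\Delta(\vec t)\;=\;\sum_{i\le k}\widetilde F(t_i)\;-\;\sum_{i>k}\widetilde F(t_i).
\]
The tuples with $\Delta=0$ include all Vinogradov solutions (those whose power sums $P_\nu$, $\nu=1,\ldots,\ell$, agree on the two halves), whose number is at most $J_{k,\ell}(H)\le H^{2k-\ell(\ell+1)/2+o(1)}\le H^{2k-\ell+o(1)}$ by the choice $k=\kappa(\ell)$; the remaining tuples with $\Delta=0$ are handled by an elementary algebraic fibre count using the fact that $\Delta$ depends on $\vec t$ only through its power sums. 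For tuples with $\Delta=jp$, $j\ne 0$, the inequality $|\Delta|\ll pH^\ell$ forces $|j|\ll H^\ell$, and the same fibre-counting bounds their total contribution by $O(H^{2k}/p+H^{2k-\ell+o(1)})$.

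Assembling these estimates and substituting into the H\"older inequality gives the stated bound; the main term $HK/p$, being at most $H(K/p)\le H(K/p)^{1/(2k)}$, is easily absorbed. The principal obstacle is the upper bound on $M_k$: the Vinogradov contribution is directly under the control of $\kappa(\ell)$, but the non-Vinogradov tuples require a case analysis according to the relative sizes of $p$ and $H^\ell$. When $p\gg H^\ell$, the integer range of the $\widetilde F$-sums restricts the number of admissible nonzero multiples of $p$ in the difference; when $p\ll H^\ell$, one instead exploits the approximate equidistribution of the $F$-sums modulo $p$. These two regimes correspond precisely to the two summands $(K/p)^{1/(2\kappa(\ell))}$ and $(K/H^\ell)^{1/(2\kappa(\ell))}$ in the lemma.
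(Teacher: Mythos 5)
Your proposal is correct and matches the intended argument: the paper cites~\cite[Theorem~1]{CGOS} without reproducing the proof, and that proof is precisely the mean-value argument you give --- Fourier expansion of the indicator of $\cJ$, H\"older with exponent $2\kappa(\ell)$, and the bound $M_k\ll H^{2k+o(1)}/p+H^{2k-\ell+o(1)}$ obtained by fibring the congruence count over the power-sum vector $(P_1,\ldots,P_\ell)$, which admits $\ll H^{\ell(\ell-1)/2}(H^\ell/p+1)$ values compatible with $\sum_\nu A_\nu P_\nu\equiv 0\pmod p$, each contributing at most $J_{\kappa(\ell),\ell}(H)\le H^{2k-\ell(\ell+1)/2+o(1)}$ tuples. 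The only small imprecision is in your closing paragraph: there is no genuine case split on $p$ versus $H^\ell$, since the single fibre count $\ll H^{\ell(\ell-1)/2}(H^\ell/p+1)\cdot J_{\kappa(\ell),\ell}(H)$ already produces both terms of $M_k$ uniformly (and likewise the fibre count subsumes the Vinogradov tuples with $\vec P=0$, so they need not be treated separately).
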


Note that Kerr~\cite[Theorem~3.1]{Ker} gives a more general 
form of Lemma~\ref{lem:NFIJ-1} that applies to multivariate 
polynomials and also to congruences modulo a composite 
number. 

We also note, that  several other results from~\cite{CCGHSZ,CGOS} can 
be extended from the case $H=K$ to the general case as well, but this does 
not affect our main result. 

Clearly the bound of Lemma~\ref{lem:NFIJ-1} is nontrivial 
provided that $K \le p^{1-\varepsilon}$ and also $H^\ell > K^{1 + \varepsilon}$
for a fixed $\varepsilon > 0$ and a sufficiently large $H$. 
It is easy to see that this conditions cannot be substantially improved as
it is clear that if $K = p$ then $N_F(\cI,\cJ) = H$ for any polynomial 
$F  \in \F_p[X]$. Also, if $K = H^\ell$ then $N_F(\cI,\cJ) = H$
for $\cI = \{1, \ldots, H\}$, $\cJ = \{1, \ldots , K\}$ 
and $F(X) = X^\ell$. 
The above example may suggest that unless the coefficients of $F$ are 
small, the value of $N_F(\cI,\cJ)$ can be nontrivially estimated
even for $K > H^\ell$. We now show that this is false
and in fact there are many other polynomials, which we call {\it
exceptional\/}, which have the same property.

\subsection{Exceptional polynomials}
\label{sec:Except Poly}

Let us fix integers $s\ge 2$, $0\le r_i < s^i$, 
$i=0, \ldots, \ell$ and 
 $K > \ell H^\ell$.
 We now 
consider a polynomial 
$$
F(X) = \sum_{i=0}^\ell A_i X^i \in \F_p[X],
$$
where 
\begin{equation}
\label{eq:exam}
A_i \in  \left[\frac{r_i}{s^i}p, \frac{r_i}{s^i}p + \frac{K}{\ell H^{i}}\right], \qquad 
i = 0, \ldots, \ell.
\end{equation}
In particular
$A_is^i \equiv B_i \pmod p$ for some integers 
$$
B_i \in \left[0,  \frac{K s^i}{\ell H^{i}}\right], \qquad 
i = 0, \ldots, \ell.
$$
Then for $t = su$ with an integer $u \in [1, H/s]$ we
have 
$$
F(t) = F(su) = \sum_{i=0}^\ell A_i (su)^i \equiv
\sum_{i=0}^\ell B_i u^i \pmod p.
$$
Hence $F(t) \in [0, K]$ for such values of $t$, which implies the inequality 
$N_F(\cI,\cJ)  \ge H/s -1$ for the above example. Taking 
$s$ to be small, we see that $F$ has quite large 
coefficients, however no nontrivial bound on 
$N_F(\cI,\cJ)$ is possible for  $K > \ell H^\ell$.

In fact, in Sections~\ref{sec:Flat Poly} and~\ref{sec:Oscill Poly}
we  give examples of polynomials 
with large coefficients that remain small 
for all  small values of the arguments, that is,
polynomials for which $N_F(\cI,\cJ) = H$, 
for some rather short intervals $\cJ$ compared 
with the length of $\cI$.

In the above examples, the ratios of the coefficients 
and $p$ are chosen to be very close to rational numbers with small numerators
and denominators. 
Alternatively, 
this can be written as the condition that the coefficients 
of $F$ satisfy
$$
A_i v \equiv u_i \pmod p
$$
for some small integers $v, u_i$, $i =0, \ldots, \ell$.

Below we present several results that demonstrate that
this property indeed captures the class of exceptional polynomials $F$
with large values of $N_F(\cI,\cJ)$ adequately, at least 
in the qualitative sense.  

First we need to recall some number theoretic results. 

\subsection {Some preparations}
\label{sec:Prep}

The following result is well-known and can be found, for example, in~\cite[Chapter~1, Theorem~1]{Mont}
(which is a  more precise form of the celebrated Erd{\H o}s--Tur\'{a}n inequality).

\begin{lemma}
\label{lem:ET small int}
Let $\gamma_1, \ldots, \gamma_H$ be a sequence of $H$ points of the unit interval $[0,1]$.
Then for any integer $R\ge 1$, and an interval $[\alpha, \beta] \subseteq [0,1]$,
we have
\begin{equation*}
\begin{split}
\# \{t=1, \ldots, H~:&~\gamma_t  \in [\alpha, \beta]\} - H(\beta - \alpha)\\
\ll \frac{H}{R} + &\sum_{r=1}^R \(\frac{1}{R} +\min\{\beta - \alpha, 1/r\}\)
\left|\sum_{t=1}^H \exp(2 \pi i r \gamma_t)\right|.
\end{split}
\end{equation*}
\end{lemma}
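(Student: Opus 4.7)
The plan is to view this as a quantitative version of Weyl's equidistribution criterion and prove it via the Beurling--Selberg--Vaaler method of approximating the indicator function of $[\alpha,\beta]$ by trigonometric polynomials. The core idea is that the left-hand side is exactly
$$
\sum_{t=1}^H \chi_{[\alpha,\beta]}(\gamma_t) - H\int_0^1 \chi_{[\alpha,\beta]}(x)\,dx,
$$
so if we can sandwich $\chi_{[\alpha,\beta]}$ between two trigonometric polynomials of degree $R$ whose Fourier coefficients we understand, the problem reduces to bounding exponential sums of the $\gamma_t$.

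First, I would construct majorant and minorant trigonometric polynomials $S_R^\pm$ of degree at most $R$ with
$$
S_R^-(x) \le \chi_{[\alpha,\beta]}(x) \le S_R^+(x) \quad \text{for all } x \in \mathbb{R}/\mathbb{Z},
$$
and with Fourier expansions $S_R^\pm(x)=\sum_{|r|\le R}\widehat{S}_R^\pm(r)e^{2\pi irx}$ satisfying
$$
\widehat{S}_R^\pm(0) = (\beta-\alpha) \pm O(1/R), \qquad |\widehat{S}_R^\pm(r)| \ll \min\{\beta-\alpha,\ 1/r\} + 1/R
$$
for $1 \le |r| \le R$. Such polynomials are obtained from Vaaler's refinement of the Beurling--Selberg extremal functions: one builds an entire function of exponential type $2\pi$ that majorises the signum function and sits within $L^1$-distance $\tfrac12$ of it, then uses rescaling and periodisation to obtain a degree-$R$ majorant of an interval whose mean differs from $\beta-\alpha$ by $O(1/R)$, with Fourier coefficients decaying like $\min\{\beta-\alpha,1/r\}$.

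Given such $S_R^\pm$, I would then evaluate both sides on the sequence: the majorant gives
$$
\sum_{t=1}^H \chi_{[\alpha,\beta]}(\gamma_t) \le \sum_{t=1}^H S_R^+(\gamma_t) = H\widehat{S}_R^+(0) + \sum_{1\le |r|\le R}\widehat{S}_R^+(r)\sum_{t=1}^H e^{2\pi ir\gamma_t},
$$
and substituting $\widehat{S}_R^+(0)=\beta-\alpha+O(1/R)$ and the coefficient bound above yields the upper bound in the stated form. Symmetry of $|\sum_t e^{2\pi i r \gamma_t}|$ under $r \mapsto -r$ collapses the sum over $|r|\le R$ into a sum over $1\le r \le R$. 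The minorant $S_R^-$ gives the matching lower bound by the same argument.

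The principal obstacle is the construction and analysis of the extremal trigonometric polynomials $S_R^\pm$: proving both the one-sided inequality $S_R^- \le \chi_{[\alpha,\beta]} \le S_R^+$ and simultaneously the sharp Fourier coefficient decay requires the full Beurling--Selberg machinery (interpolation at half-integers of the signum function, together with Vaaler's lemma on the Fej\'er kernel). Once these extremal polynomials are in hand, the deduction of the inequality is essentially a direct Fourier expansion, which is why Montgomery's exposition treats this as a standard consequence.
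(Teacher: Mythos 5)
Your proposal is correct: the paper itself does not prove this lemma but simply cites Montgomery's Chapter~1, Theorem~1, whose proof is precisely the Beurling--Selberg--Vaaler argument you outline. The Fourier-coefficient bounds you state for the extremal majorant and minorant $S_R^{\pm}$ (mean $=(\beta-\alpha)\pm O(1/R)$, nonzero coefficients $\ll \min\{\beta-\alpha,1/r\}+1/R$ for $1\le |r|\le R$) are exactly what is needed, and inserting the sandwich and expanding in Fourier series gives the stated inequality.
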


To use Lemma~\ref{lem:ET small int} we also need an estimate on exponential sums
with polynomials,  which is essentially due to Weyl, see~\cite[Proposition~8.2]{IwKow}.

Let $\dmod{\xi}{1} = \min\{|\xi - k|~:~k\in \Z\}$ denote the distance
between a real $\xi$ and the closest integer (which can be considered as a
modulo 1 version of the notation $\dmod{z}{p}$). 

\begin{lemma}
\label{lem:Weyl}
Let $\psi(X) \in \R[X]$ be a polynomial of degree $\ell\ge 2$ with the leading
coefficient $\vartheta \ne 0$.
Then
\begin{equation*}
\begin{split}
&\left|\sum_{t=1}^H \exp(2 \pi i \psi(t))\right|\\
&\qquad \ll H^{1-\ell/2^{\ell-1}}  \(\sum_{-H < t_1 , \ldots,  t_{\ell-1}  < H}
\min\left\{H, \frac{1}{\dmod{\vartheta \ell!  t_1  \cdots
t_{\ell-1}}{1}}\right\}\)^{1/2^{\ell-1}}.
\end{split}
\end{equation*}
\end{lemma}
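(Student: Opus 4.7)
The plan is to apply Weyl differencing (combined with Cauchy--Schwarz) iteratively $\ell - 1$ times, reducing the problem to bounding sums of linear phases. Writing $\Delta_h \psi(X) = \psi(X+h) - \psi(X)$ for the forward difference operator and setting $S = \sum_{t=1}^H \exp(2\pi i\,\psi(t))$, expansion of $|S|^2$ followed by the substitution $t_1 = t$, $t_2 = t + h_1$ yields, up to a sign in the argument,
$$
|S|^2 = \sum_{|h_1| < H} \sum_{t \in I_{h_1}} \exp\bigl(\pm 2\pi i\,\Delta_{h_1}\psi(t)\bigr),
$$
where $I_{h_1} \subseteq \{1,\ldots,H\}$ is an interval of length at most $H$. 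A direct computation shows that $\Delta_h\psi(X)$ has degree $\ell - 1$ with leading coefficient $\ell\vartheta h$.

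Iterating the procedure --- squaring, applying Cauchy--Schwarz (which contributes a factor of $O(H)$ per step beyond the first, coming from the number of outer summation variables), then introducing a further difference variable --- one arrives after $k$ stages at a bound of the form
$$
|S|^{2^k} \ll H^{2^k - k - 1} \sum_{|h_1|, \ldots, |h_k| < H} \left|\sum_{t \in I_{h_1,\ldots,h_k}} \exp\bigl(\pm 2\pi i\,\Delta_{h_k}\cdots\Delta_{h_1}\psi(t)\bigr)\right|,
$$
where an easy induction shows that the inner polynomial has degree $\ell - k$ with leading coefficient $\frac{\ell!}{(\ell-k)!}\,\vartheta\, h_1\cdots h_k$.

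Taking $k = \ell - 1$ makes the inner polynomial linear in $t$ with leading coefficient $\pm\ell!\,\vartheta\,h_1\cdots h_{\ell-1}$. The inner sum is then a geometric progression, to which the standard estimate
$$
\left|\sum_{t \in I} \exp\bigl(2\pi i(\alpha t + \beta)\bigr)\right| \le \min\bigl\{|I|,\, 1/(2\dmod{\alpha}{1})\bigr\}
$$
applies, bounding the inner sum by $\min\{H,\, 1/\dmod{\ell!\,\vartheta\, h_1\cdots h_{\ell-1}}{1}\}$ (with the convention that the minimum equals $H$ whenever any $h_i$ vanishes, in which case the linear coefficient is zero and the sum is trivially at most $H$). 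Substituting back and taking the $2^{\ell-1}$-th root yields the claimed inequality, since $1 - \ell/2^{\ell-1} = (2^{\ell-1} - \ell)/2^{\ell-1}$.

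The main technical delicacy is bookkeeping rather than any deep idea: one must verify that the range of $t$ remains an interval of length at most $H$ throughout (which is all we need); check the inductive formula for the leading coefficient, using that $\Delta_h$ applied to a polynomial of degree $d$ with leading coefficient $a$ produces one of degree $d-1$ with leading coefficient $d\,a\,h$; and track the precise power of $H$ generated by each Cauchy--Schwarz step. Assembling these to reach the exact intermediate exponent $2^{\ell-1} - \ell$ is the one place where the numerology must be followed with care.
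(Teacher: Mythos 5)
Your proof is correct and is precisely the standard Weyl differencing argument: iterated squaring and Cauchy--Schwarz reduces the degree-$\ell$ phase to a linear phase with leading coefficient $\ell!\,\vartheta\,t_1\cdots t_{\ell-1}$, to which the geometric-series bound applies, and the numerology $|S|^{2^k}\ll H^{2^k-k-1}\sum T_k$ with $k=\ell-1$ gives the stated exponent $H^{2^{\ell-1}-\ell}$. The paper does not supply its own proof of this lemma -- it cites Proposition~8.2 of Iwaniec--Kowalski, whose proof is the same argument -- so your proposal matches the intended route.
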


Finally, we need the following version of several 
similar and well known inequalities, see, 
for example,~\cite[Section 8.2]{IwKow}. 

\begin{lemma}
\label{lem:Sum||}
For any integers $w$, $v$, $H$ and $Z$  with 
$$
\gcd(v,w) = 1, \qquad H \ge 1, \qquad 2Z > v \ge 1
$$
and a real $\alpha$ 
with 
$$
\left| \alpha - \frac{w}{v}\right| < \frac{1}{2Zv}
$$
we have
$$
\sum_{z=1}^Z \min\left\{H, \frac{1}{\dmod{\alpha z}{1}}\right\} \ll
HZv^{-1} + Z \log v.
$$
\end{lemma}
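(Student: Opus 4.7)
The plan is to exploit the rational approximation $\alpha \approx w/v$ by partitioning the summation range $\{1,\ldots,Z\}$ into consecutive blocks of length $v$. Writing $\alpha = w/v + \beta$ with $|\beta| < 1/(2Zv)$, the perturbation $z\beta$ satisfies $|z\beta| \le 1/(2v)$ for every $z \in [1,Z]$, so $\dmod{\alpha z}{1}$ remains within $1/(2v)$ of $\dmod{wz/v}{1}$ throughout the range of summation.

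Inside a single block $z = qv + r$ with $r \in \{1, \ldots, v\}$, the assumption $\gcd(v,w) = 1$ implies that $wr \pmod{v}$ runs once through a complete residue system as $r$ varies. Let $a_r$ denote the representative of $wr$ modulo $v$ lying in $(-v/2, v/2]$, so that $\dmod{wz/v}{1} = |a_r|/v$. For the unique value of $r$ with $a_r = 0$, I would use the trivial bound $\min\{H, 1/\dmod{\alpha z}{1}\} \le H$. For the remaining $v-1$ values of $r$, the triangle inequality together with $|z\beta| \le 1/(2v)$ yields
$$
\dmod{\alpha z}{1} \ge \frac{|a_r|}{v} - \frac{1}{2v} \ge \frac{|a_r|}{2v},
$$
since $|a_r| \ge 1$, whence $1/\dmod{\alpha z}{1} \le 2v/|a_r|$.

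Summing within a single block then produces a contribution of at most
$$
H \;+\; 4v\sum_{a=1}^{\lfloor v/2 \rfloor} \frac{1}{a} \;\ll\; H + v\log v,
$$
the factor $2$ accounting for both signs of $a_r$. The number of blocks is at most $\lceil Z/v\rceil$, which by the hypothesis $2Z > v$ is $\ll Z/v$ (the fractional last block contributes at most a constant multiple of the full block bound). Multiplying yields
$$
\sum_{z=1}^{Z} \min\left\{H, \frac{1}{\dmod{\alpha z}{1}}\right\} \;\ll\; \frac{Z}{v}\bigl(H + v\log v\bigr) \;\ll\; \frac{HZ}{v} + Z\log v,
$$
using once more that $v < 2Z$ to absorb the leftover $H$ and $v\log v$ terms. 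The only genuinely non-trivial step is the triangle-inequality bound on $\dmod{\alpha z}{1}$ away from integers, which crucially uses both the closeness of $\alpha$ to $w/v$ and the coprimality of $w$ and $v$; everything else is routine bookkeeping.
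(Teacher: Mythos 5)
Your proof is correct and follows essentially the same approach as the paper's: isolate the $\ll Z/v$ values of $z$ divisible by $v$ and bound their contribution trivially by $H$ each, then use the reverse triangle inequality together with $|z\beta|\le 1/(2v)$ and the coprimality of $w$ and $v$ to compare $\dmod{\alpha z}{1}$ with $\dmod{wz/v}{1}$ and reduce the rest to a harmonic sum over a complete residue system, giving $Z\log v$. The only cosmetic difference is that you organize the count block-by-block whereas the paper separates the $z\equiv 0\pmod v$ terms globally and then uses periodicity; the two bookkeeping schemes are equivalent.
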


\begin{proof} First we estimate the contribution from $z \equiv 0 \pmod v$
trivially as $HZ/v$. For the remaining $z$ we note that
$$
\dmod{ \alpha z}{1}\ge \dmod{ \frac{wz}{v}}{1} - \frac{z}{2Zv} 
\ge \frac{1}{2} \dmod{ \frac{wz}{v}}{1}
$$
as 
$$
\dmod{\frac{wz}{v}}{1}  \ge \frac{1}{v} \mand \frac{z}{2Zv} \le \frac{1}{2v} .
$$
Therefore, the contribution from $z \not\equiv 0 \pmod v$ can be estimated as
\begin{equation*}
\begin{split}
\sum_{\substack{z=1\\z \not \equiv 0\pmod v}}^Z \min\left\{H, 
\frac{1}{\dmod{\alpha z}{1}}\right\} & \ll 
\sum_{\substack{z=1\\z \not \equiv 0\pmod v}}^Z \dmod{\frac{wz}{v}}{1}^{-1}\\
& \le \(Z/v + 1\) \sum_{z=1}^{v-1} \dmod{\frac{wz}{v}}{1}^{-1}.
\end{split}
\end{equation*}
Since $\gcd(v,w) = 1$ and $v \le 2Z$ this simplifies as 
\begin{equation*}
\begin{split}
\sum_{\substack{z=1\\z \not \equiv 0\pmod v}}^Z \min\left\{H, \frac{1}
{\dmod{\alpha z}{1}}\right\} & \ll
Zv^{-1}\sum_{z=1}^{v-1} \dmod{\frac{z}{v}}{1}^{-1} \\
& \ll
Z\sum_{1 \le z \le v/2} \frac{1}{z}  \ll Z\log v,
\end{split}
\end{equation*}
which concludes the proof.
\end{proof}

\subsection{Coefficients  of exceptional polynomials} 
\label{sec:Coeff Except}

Our first result uses in an essential way the argument 
 of the proof of~\cite[Theorem~3]{CGOS},

\begin{lemma}
\label{lem:NFIJ-2} Let 
$$
F(X) = \sum_{i=0}^\ell A_i X^i \in \F_p[X]
$$
be a polynomial of degree $\ell$. 
Assume that  for some  $\rho \le 1$  and
intervals $\cI = \{1, \ldots, H\}$ 
and $\cJ = \{1, \ldots, K\}$ with $1 \le H, K < p$ we have
$$
N_F(\cI,\cJ) \ge  \max\{(2\ell +1), \rho H\}.
$$
Then 
$$
A_i v \equiv u_i \pmod p
$$
for some  integers 
$$
1 \le v \ll  \rho^{-\ell(\ell+1)/2} \mand  u_i \ll \rho^{-\ell(\ell-1)/2} K H^{\ell-i}, 
\ i =0, \ldots, \ell.
$$
\end{lemma}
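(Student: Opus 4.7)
The plan is to follow the approach of~\cite{CGOS}: use a pigeonhole argument to extract $\ell+1$ sample points $t_0 < t_1 < \cdots < t_\ell$ in $\cI$ that are clustered in a very short sub-interval, then apply Lagrange interpolation at these points to reconstruct $F$ modulo $p$, and finally clear denominators by a single common Vandermonde factor in order to read off the desired $v$ and all of the $u_i$ simultaneously.

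For the first step, since $\cI$ contains $N := N_F(\cI,\cJ) \ge \max\{2\ell+1, \rho H\}$ values $t$ with $F(t) \bmod p \in \cJ$, I would partition $\cI$ into $B = \lfloor N/(\ell+1)\rfloor \ge 1$ consecutive equal blocks of length $L \le (\ell+1)H/N + 1$; the baseline $N \ge 2\ell+1$ ensures $B \ge 1$, while the lower bound $N \ge \rho H$ and the hypothesis $\rho \le 1$ give $L \ll \rho^{-1}$. Standard pigeonhole then delivers a block containing at least $\ell+1$ good points $t_0 < t_1 < \cdots < t_\ell$, all of whose pairwise differences satisfy $|t_j - t_i| \le L$.

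Set $v := V = \prod_{0 \le i < j \le \ell}(t_j - t_i)$, a positive integer with $1 \le v \le L^{\ell(\ell+1)/2} \ll \rho^{-\ell(\ell+1)/2}$. Writing $\bar F(t_j) \in \{1,\ldots,K\}$ for the representative of $F(t_j)\bmod p$ lying in $\cJ$, Lagrange interpolation modulo $p$ yields
\[
F(X) \equiv \sum_{j=0}^\ell \bar F(t_j)\prod_{k \ne j}\frac{X - t_k}{t_j - t_k}\pmod p,
\]
and multiplying through by $V$ gives $V\,F(X) \equiv G(X) \pmod p$ with
\[
G(X) = \sum_{j=0}^\ell (-1)^{\ell-j}\bar F(t_j)\Bigl(\prod_{\substack{i<k\\ i,k\ne j}}(t_k - t_i)\Bigr)\prod_{k \ne j}(X - t_k) \in \Z[X].
\]
Expanding $\prod_{k\ne j}(X - t_k)$ in terms of elementary symmetric polynomials of the $t_k$'s (each bounded by $\binom{\ell}{\ell-i}H^{\ell-i}$) and using that the reduced Vandermonde $\prod_{i<k,\ i,k\ne j}(t_k - t_i)$ has size at most $L^{\ell(\ell-1)/2} \ll \rho^{-\ell(\ell-1)/2}$, the $X^i$-coefficient of each summand is $\ll \rho^{-\ell(\ell-1)/2} H^{\ell-i}$. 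Summing over the $\ell+1$ values of $j$ with weights $|\bar F(t_j)| \le K$, the $X^i$-coefficient $u_i$ of $G$ obeys $|u_i| \ll \rho^{-\ell(\ell-1)/2} K H^{\ell-i}$, and comparing $V\,F(X) \equiv G(X) \pmod p$ coefficientwise gives $v A_i \equiv u_i \pmod p$, as claimed.

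The only real work is the bookkeeping in the last step: one must verify that the least common denominator arising from Lagrange interpolation is exactly $V$, so that a single $v$ serves for every $A_i$ simultaneously, and that the $H$-dependence of the resulting integer $u_i$ scales as $H^{\ell-i}$ rather than the naive $H^\ell$. The decisive observation is that for each $j$ the factor $\prod_{k\ne j}(t_j - t_k)$ divides $V$ with quotient $\pm \prod_{i<k,\ i,k\ne j}(t_k - t_i)$, and this quotient—together with the elementary symmetric polynomials in the $t_k$'s—is precisely what splits the total power of $\rho^{-1}$ asymmetrically between $v$ and the $u_i$.
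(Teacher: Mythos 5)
Your proof is correct and is essentially the paper's proof: the paper also extracts a cluster of $\ell+1$ good sample points via pigeonhole, sets $v$ equal to the Vandermonde determinant of the clustered points, and reads off the $u_i$ from the cofactor expansion with the minor Vandermonde factored out (Cramer's rule on the Vandermonde system is precisely your Lagrange interpolation with denominators cleared). The size splitting you highlight—differences bounded by $L\ll\rho^{-1}$ in the reduced Vandermonde versus the raw points bounded by $H$ in the residual degree-$(\ell-i)$ factor—is exactly the observation the paper uses to get the asymmetric exponents.
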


\begin{proof} As we have mentioned, we follow the proof of~\cite[Theorem~3]{CGOS}.


Let $N = N_F(\cI,\cJ)$. 
Since $N \ge 2(\ell +1)$, there exist $\ell+1$  pairs $(x_1,y_1),\ldots
,(x_{\ell+1},y_{\ell+1})$ such that $x_1,\ldots ,x_{\ell+1}$ lie in an
interval $\widetilde \cI$ of length $2(\ell+1)H/N$ and $y_1, \ldots, y_\ell \in \cJ$.

Now, we consider the system of congruences
$$
\left\{
\begin{array}{lll}
A_0+A_1x_1+\ldots +A_\ell x_1^\ell&\equiv &y_1\pmod p,\\
&\ldots & \\
A_0+A_1x_{\ell+1}+\ldots +A_\ell x_{\ell+1}^\ell &\equiv &y_{\ell+1}\pmod p.
\end{array}\right.
$$
The determinant $v$ of this system is the determinant of a Vandermonde matrix:
$$
v=\det \(\begin{matrix} 1 & x_1 &\ldots & x_1^\ell\\ \ldots & \ldots &
\ldots &\ldots \\1 & x_{\ell+1} &\ldots & x_{\ell+1}^\ell\end{matrix}\)=
\prod_{1\le i<j\le d+1}(x_j-x_i).
$$
Note that $v\not\equiv 0\pmod p$. Thus, we have that
\begin{equation}
\label{eq:vAiui}
vA_i\equiv u_i\pmod p,
\end{equation} 
where
\begin{equation}\label{ui}
\begin{split}
u_i&=\det \(\begin{matrix} 1 &\ldots
&x_1^{i-1}& y_1 & x_1^{i+1}&\ldots & x_1^\ell\\ \ldots & \ldots &
\ldots &\ldots \\1 & \ldots & x_{\ell+1}^{i-1}& y_{\ell+1} & x_{\ell+1}^{i+1}
&\ldots & x_{\ell+1}^\ell\end{matrix}\)\\
& =\sum_{j=1}^{\ell+1}
(-1)^{i+j}y_jV_{ij}
\end{split}
\end{equation}
and $V_{ij}$ is the determinant of
the matrix obtained from the Vandermonde matrix after removing the
$j$-th row and the $i$-th column, $i =0, \ldots, \ell$, $j =1, \ldots, \ell +1$.

It is easy to see that for each $i = 0, \ldots, \ell$, the 
determinant 
$V_{ij}$ is a
polynomial in $\ell$ variables $x_\nu$, $1 \le \nu \le \ell+1$, $\nu \ne j$, of degree 
$\ell(\ell+1)/2-i$, which
vanishes when $x_r=x_s$ for distinct $r$ and $s$.
Thus
$$
V_{ij}=W_i(x_1,\ldots, x_{j-1},x_{j+1},\ldots
,x_{\ell+1}) \prod_{\substack{1\le r<s\le \ell+1\\
r,s\ne j}}(x_s-x_r),
$$
where $W_i$ is a polynomial (that does not depend on $F$ or $p$)
of degree $\ell-i$.
Therefore, we have 
$$
V_{ij}\ll (H/N)^{\ell(\ell-1)/2}H^{\ell-i} \le \rho^{-\ell(\ell-1)/2}H^{\ell-i}.
$$ 
This
estimate  and~\eqref{ui} together imply the bound
\begin{equation}
\label{eq:ui}
u_i \ll \rho^{-\ell(\ell-1)/2} K H^{\ell-i}, \qquad  i =0, \ldots, \ell.
\end{equation}
On the other hand, it is clear that
\begin{equation}
\label{eq:v}
v\ll  (H/N)^{\ell(\ell+1)/2} \le \rho^{-\ell(\ell+1)/2}.
\end{equation}
Then, comparing~\eqref{eq:vAiui} with~\eqref{eq:ui} and~\eqref{eq:v}
we conclude the proof. 
\end{proof}

We note that if $s$ and $\rho$ are fixed the expressions given by the
example~\eqref{eq:exam}
and those of Lemma~\ref{lem:NFIJ-2} differ by a factor $H^\ell$ and it is 
certainly natural to try to reduce this gap. 

We now obtain yet another estimate on $N_F(\cI,\cJ)$ that depends 
only on the behaviour of the leading coefficient. 

We recall the definition of $\kappa(\ell)$ used 
in  Lemma~\ref{lem:NFIJ-1}. Since for $H^\ell > K$ we can use the
bound of Lemma~\ref{lem:NFIJ-1}, we assume that 
\begin{equation}
\begin{split}
\label{eq:H K}
2 \ell!H^{\ell-1} \le K. 
\end{split}
\end{equation}
Then the argument of the proof of~\cite[Theorem~5]{CCGHSZ} 
leads to the following result:

\begin{lemma}
\label{lem:NFIJ-3} Let 
$$
F(X) = \sum_{i=0}^\ell A_i X^i \in \F_p[X]
$$
be a polynomial of degree $\ell$. 
Assume that  for some $\rho \le 1$ and
intervals $\cI = \{1, \ldots, H\}$ 
and $\cJ = \{1, \ldots, K\}$ with $1 \le H, K < p$
satisfying~\eqref{eq:H K} we have
$$
N_F(\cI,\cJ) \ge \rho H.
$$
Then either 
$$
\rho \ll \min\{K/p, H^{-1/2^{\ell-1}}p^{o(1)}\},
$$
or
$$
A_\ell v \equiv  u \pmod p
$$
for some  integer
$$
1 \le v \le \rho^{-2^{\ell-1}}p^{o(1)} \mand
u\ll H^{-\ell+1}K.
$$
\end{lemma}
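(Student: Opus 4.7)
My plan mirrors the proof of~\cite[Theorem~5]{CCGHSZ}. The overall strategy is to convert the hypothesis $N_F(\cI,\cJ) \ge \rho H$ into a lower bound on an exponential sum, apply Weyl's inequality (Lemma~\ref{lem:Weyl}) to isolate the leading coefficient $A_\ell$, and finally extract the rational approximation to $A_\ell/p$ using Dirichlet's theorem combined with Lemma~\ref{lem:Sum||}.

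\textbf{Step 1 (Counting to exponential sums).} I would first expand the indicator of $\cJ$ as a Fourier series modulo $p$ and invoke Lemma~\ref{lem:ET small int} with a truncation parameter $R$, which gives
$$
N_F(\cI,\cJ) - \frac{HK}{p} \ll \frac{H}{R} + \sum_{r=1}^{R}\Bigl(\frac{1}{R}+\min\{K/p,\,1/r\}\Bigr)|S_r|,
$$
where $S_r=\sum_{t=1}^{H}\exp(2\pi i r F(t)/p)$. If $\rho \ll K/p$ we are already in the first branch of the dichotomy; otherwise the main term $HK/p$ is at most $\rho H/2$, and after choosing $R$ suitably and bounding $\sum_r 1/r$ by $\log R$, one obtains an index $r\in[1,R]$ for which $|S_r|\gg \rho H\cdot p^{-o(1)}$.

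\textbf{Step 2 (Weyl + Dirichlet + Lemma~\ref{lem:Sum||}).} Applying Lemma~\ref{lem:Weyl} to $\psi(X)=rF(X)/p$, whose leading coefficient is $\vartheta=rA_\ell/p$, and then raising to the $2^{\ell-1}$-power, I would collect the resulting sum over $(t_1,\dots,t_{\ell-1})$ by setting $z=t_1\cdots t_{\ell-1}$, absorbing the divisor function into an $H^{o(1)}$ factor (the contribution from $t_i=0$ is $O(H^{\ell-1})$). For $\alpha=r\ell! A_\ell/p$ and a parameter $Z$ to be chosen, Dirichlet's theorem supplies coprime $w,\tilde v$ with $1\le \tilde v\le Z$ and $|\alpha-w/\tilde v|<1/(\tilde v Z)$, after which Lemma~\ref{lem:Sum||} yields
$$
\sum_{z=1}^{Z}\min\{H,\,1/\|\alpha z\|\}\ll HZ/\tilde v + Z\log\tilde v.
$$
Taking $Z=H^{\ell-1}$ and combining with Step~1 leads, after dividing by $H^{2^{\ell-1}}$, to the clean dichotomy
$$
\rho^{2^{\ell-1}}\ll p^{o(1)}/\tilde v + H^{-1}p^{o(1)},
$$
so that either $\rho\ll H^{-1/2^{\ell-1}}p^{o(1)}$, or $\tilde v\ll \rho^{-2^{\ell-1}}p^{o(1)}$.

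\textbf{Step 3 (Return to $A_\ell$; main obstacle).} In the non-trivial branch, setting $v=r\ell!\,\tilde v$ and $u=r\ell! A_\ell \tilde v-wp$ gives $vA_\ell\equiv u\pmod p$ with $|u|<p/Z=p/H^{\ell-1}$. Since this is weaker than the claimed $|u|\ll K H^{-\ell+1}$, I would sharpen the extraction by rerunning the Dirichlet step with the larger parameter $Z=pH^{\ell-1}/K$, which is permitted because~\eqref{eq:H K} forces $Z\ge H^{\ell-1}$ so the $z$-sum coming from Weyl is left unchanged (one extends it with nonnegative terms), while the best-approximation property of convergents ensures that the small denominator $\tilde v$ from the previous Dirichlet step still survives as the relevant approximant. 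The genuine difficulty is this simultaneous balancing of three distinct scales---the Weyl range $H^{\ell-1}$, the Dirichlet error target $K/(pH^{\ell-1})$, and the Erd\H{o}s--Tur\'an truncation $R$---together with a careful accounting of the $p^{o(1)}$ losses coming from the divisor function, the logarithm in Lemma~\ref{lem:Sum||}, and the multiplicative factor $r$ introduced in Step~1. This is where the bulk of the bookkeeping of~\cite[Theorem~5]{CCGHSZ} is concentrated.
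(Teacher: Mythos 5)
Your Steps 1 and 2 assemble the right tools (Erd{\H o}s--Tur\'{a}n, Weyl, divisors, Dirichlet, Lemma~\ref{lem:Sum||}), but a genuine gap is created by extracting a single large frequency $r$, which the paper deliberately does \emph{not} do. By isolating one $r$ and Dirichlet-approximating $\alpha=r\ell!A_\ell/p$, your final modulus is $v=r\ell!\tilde v$; since $r$ can be as large as $R\asymp p/K$, which is a power of $p$ in the interesting regime $K\le p^{1-\varepsilon}$, your $v$ is \emph{not} $\ll\rho^{-2^{\ell-1}}p^{o(1)}$ as the lemma claims. The paper instead keeps the full sum over $r\le R$, applies H\"older to pick up a factor $R^{2^{\ell-1}-1}$, and folds $r$ into the collected variable $z=\ell!\,r\,t_1\cdots t_{\ell-1}$, so that the Dirichlet step is applied to $A_\ell/p$ directly with $Z=\ell!\,H^{\ell-1}R$. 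This gives $v<2Z$ and $|u|\le p/(2Z)\ll KH^{-\ell+1}$ in one stroke, and the $R^{2^{\ell-1}}$ produced by H\"older cancels the $(K/p)^{2^{\ell-1}}$ coming from the Erd{\H o}s--Tur\'{a}n truncation. That is exactly the balance you lose the moment you drop the sum over $r$.

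The ``rerun Dirichlet with $Z'=pH^{\ell-1}/K$'' device in Step 3 does not repair this. If you rerun Dirichlet at the larger scale, the best approximant need not keep the small denominator: the convergent following $\tilde v$ may have denominator just above $H^{\ell-1}$, far below $Z'$, in which case the approximant at scale $Z'$ has a possibly much larger denominator than $\tilde v$, so you cannot carry the bound $\tilde v\ll\rho^{-2^{\ell-1}}p^{o(1)}$ over to the new $(v,u)$. And if you instead extend the $z$-sum to $z\le Z'$ and apply Lemma~\ref{lem:Sum||} at scale $Z'$, the bound $HZ'/\tilde v+Z'\log\tilde v$ is a factor $p/K$ larger than the $HH^{\ell-1}/\tilde v+H^{\ell-1}\log\tilde v$ you implicitly need, and the dichotomy degrades to $\rho^{2^{\ell-1}}\ll(p/K)(\tilde v^{-1}+H^{-1})p^{o(1)}$, strictly weaker than the stated lemma. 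The missing ingredient is the H\"older-on-$r$ step with $z$ absorbing $r$, not a cleverer choice of $Z$.
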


\begin{proof} 

Let $N = N_F(\cI,\cJ)$. We apply Lemma~\ref{lem:ET small int} to the sequence
of fractional parts $\gamma_n = \{F(t)/p\}$, $t \in \cI$,
with
$$\alpha = 1/p, \qquad \beta = K/p, \qquad R = \fl{p/K}.
$$
Without loss of generality, we can assume that $K < p/2$ as otherwise
the bound is trivial. Then we have $R\ge 1$ and thus
$$
\frac{1}{R} +\min\{\beta - \alpha, 1/r\} \ll \frac{K}{p}
$$
for $r =1, \ldots, R$, we derive
$$
N \ll \frac{HK}{p} + \frac{K}{p} \sum_{r=1}^R
\left|\sum_{t=1}^H \exp(2 \pi i r F(t)/p)\right|.
$$
Therefore, by Lemma~\ref{lem:Weyl}, we have
\begin{equation*}
\begin{split}
N \ll \frac{HK}{p} &+  \frac{H^{1-\ell/2^{\ell-1}}K}{p}   \\
  \times &\sum_{r=1}^R
\(\sum_{-H < t_1 , \ldots, t_{\ell-1}  < H}
\min\left\{H, \dmod{\frac{A_\ell}{p}\ell! r 
t_1   \cdots  t_{\ell-1}}{1}^{-1}\right\}\)^{1/2^{\ell-1}}.
\end{split}
\end{equation*}
Now, separating from the sum over $t_1,\ldots,t_{\ell-1}$
the terms with $t_1 \cdots t_{\ell-1} = 0$, 
(giving a total contribution $O(H^{\ell-1})$), we obtain
\begin{equation}
\begin{split}
\label{eq:N W}
N & \ll \frac{HK}{p} +  \frac{H^{1-\ell/2^{\ell-1}}K}{p} R(H^{\ell-1})^{1/2^{\ell-1}}
+ \frac{H^{1-\ell/2^{\ell-1}}K}{p}  W \\
& \ll \frac{HK}{p} +  H^{1-1/2^{\ell-1}} 
+ \frac{H^{1-\ell/2^{\ell-1}}K}{p}  W ,
\end{split}
\end{equation}
where
$$
W = \sum_{r=1}^R
\(\sum_{0 < |t_1|, \ldots,  |t_{\ell-1}| < H}
\min\left\{H, \dmod{\frac{A_\ell}{p}\ell! r t_1  \cdots
t_{\ell-1}}{1}^{-1}\right\}\)^{1/2^{\ell-1}}.
$$
Hence, recalling the choice of $R$, we derive
\begin{equation}
 \label{eq:J and W}
N \ll  {HK}{p} +     H^{1-1/2^{\ell-1}} + \frac{H^{1-\ell/2^{\ell-1}}K}{p}   W.
\end{equation}
The H\"{o}lder inequality implies the bound
 \begin{equation*}
\begin{split}
W^{2^{\ell-1}} \ll R^{2^{\ell-1}-1}
& \sum_{r=1}^R  \\
&\sum_{0 < |t_1|, \ldots,  |t_{\ell-1}| < H}
\min\left\{H, \dmod{\frac{A_\ell}{p}\ell! r t_1   \cdots
t_{\ell-1}}{1}^{-1}\right\}.
\end{split}
\end{equation*}
Collecting together the terms with the same value of
$$
z = \ell! r t_1   \cdots  t_{\ell-1}
$$ 
and using the well-known bound on the divisor function,
we conclude that
$$
W^{2^{\ell-1}} \ll R^{2^{\ell-1}-1} p^{o(1)}
\sum_{0 < |z| < \ell! H^{\ell-1} R}
\min\left\{H, \dmod{\frac{A_\ell}{p}z }{1}^{-1}\right\}.
$$

Let us set $Z = \ell!H^{\ell-1} R$
Note that our assumption~\eqref{eq:H K} implies that.
$$
2Z  < p.
$$
By the Dirichlet approximation theorem we can write 
$$
\left|\frac{A_\ell}{p} - \frac{w}{v}\right| \le \frac{1}{2Zv}
$$
for some integers $w$ and $1 \le v < 2Z$. Note that this immediately
implies that $|A_\ell v - pw| \le p/2Z$.
Hence
$$
A_\ell v  \equiv u \pmod p
$$
for some integer $u$ with
$$
|u| \le \frac{p}{2Z} \ll H^{-\ell+1}K.
$$

It now remains to estimate $\rho$ or $v$. 

We apply Lemma~\ref{lem:Sum||} with $Z = \ell!H^{\ell-1} R $, and we derive:
\begin{equation*}
\begin{split}
W^{2^{\ell-1}} &\ll R^{2^{\ell-1}-1} p^{o(1)}\(H^\ell R v^{-1} + H^{\ell-1} R\)\\
& = R^{2^{\ell-1}} p^{o(1)}\(H^\ell  v^{-1} + H^{\ell-1}\), 
\end{split}
\end{equation*}
which after the substitution in~\eqref{eq:N W} implies
$$
N  \ll \frac{HK}{p} + H^{1-1/2^{\ell-1}} + \frac{HK Rv^{-1/2^{\ell-1}}}{p^{1+o(1)}}
 +  \frac{H^{1-\ell/2^{\ell-1}}K}{p^{1+o(1)}} R(H^{\ell-1})^{1/2^{\ell-1}}.
$$
Noting that the last term, up to the factor $p^{o(1)}$ coincides with the second term, 
we now obtain
$$
N  \ll \frac{HK}{p} + H^{1-1/2^{\ell-1}}p^{o(1)}+ H v^{-1/2^{\ell-1}}p^{o(1)}.
$$
Hence we either have
$$
\rho H   \ll \frac{HK}{p} + H^{1-1/2^{\ell-1}}p^{o(1)}
$$
or 
$$
\rho H  \ll  H v^{-1/2^{\ell-1}}p^{o(1)},
$$
which concludes the proof. 
\end{proof}

Note that the dependence on $\rho$ in the bound on $v$ of 
Lemma~\ref{lem:NFIJ-3} is worse that that of Lemma~\ref{lem:NFIJ-2}.
However, since we are mostly interested in large values of $\rho$, for example,
the extreme case $\rho =1$ is of special interest, Lemma~\ref{lem:NFIJ-3} 
limits the possible values of $A_\ell$  as $O( H^{-\ell+1}K)$, 
which is  stricter than the 
bound  $O\(K\)$ that follows from the estimates 
of  Lemma~\ref{lem:NFIJ-2} on $v$ and $u_\ell$. 

\section{Lattices and Polynomials}

\subsection{Background on lattices}
\label{sec:Lat CVP}

As in~\cite{BV1,BV2}, and then in~\cite{Shp1,ShpWint}, 
our results rely on 
some lattice algorithms. 
 We therefore review some relevant results and definitions, we refer 
to~\cite{ConSlo,GrLoSch,GrLe} for more details and the general theory.

Let $\{{\vb}_1,\ldots,{\vb}_s\}$ be a set of $s$ linearly independent vectors in
${\R}^s$. The set
of  vectors
$$
L=\{\vz  \ : \ \vz=\sum_{i=1}^s\ c_i\vb_i,\quad c_1, \ldots, c_s\in\Z\}
$$
is called an $s$-dimensional full rank lattice. 

The set $\{ {\vb}_1,\ldots,
{\vb}_s\}$
is called a {\it basis\/}
of $L$.

The volume of the parallelepiped defined by the 
vectors ${\vb}_1,\ldots, {\vb}_s$ is called {\it 
the volume\/} of the lattice and denoted by $\vol(L)$. 
Typically, lattice problems are easier when the Euclidean 
norms of all basis vectors are close to $\vol(L)^{1/s}$. 

One of the most fundamental problems in this area is 
 the {\it closest vector problem\/},  CVP:
given a basis of a lattice $L$ in $\R^s$ and a target vector $\vec{u} \in \R^s$,
find a lattice vector $\vec{v} \in L$ which minimizes
the Euclidean norm $\|\vec{u}-\vec{v}\|$ among all lattice vectors.
It is well know that  CVP is  {\bf NP}-hard when the 
dimension $s\to \infty$
(see~\cite{Ngu,NgSt1,NgSt2,Reg} for references). 

However, its approximate 
version admits a deterministic polynomial time algorithm which goes back to 
the lattice basis reduction algorithm of Lenstra, Lenstra
and  Lov{\'a}sz~\cite{LLL}, see also~\cite{Ngu,NgSt1,NgSt2,Reg} 
for possible improvements and further references.

We remark that all lattices that appear in this paper are 
of fixed dimension, so instead of using the above approximate 
CVP algorithms,
we can simply use the following result of 
Micciancio and  Voulgaris~\cite{MicVou}. 

Let $\|\vz\|$ denote the standard Euclidean norm in ${\R}^s$.

\begin{lemma}
\label{lem:CVP}
For any fixed $s$, there exists a deterministic algorithm which,
for given a lattice $L$ generated by an integer vectors 
$$
{\vb}_1,\ldots,{\vb}_s \in \Z^s
$$
 and a vector $\vr =(r_1, \ldots, r_s) \in{\Z}^s$,
in time polynomial in 
$$
\sum_{i=1}^s \(\log \|b_i\| + \log |r_i| + 1\)
$$
finds a lattice vector
$\vv = (v_1, \ldots, v_s) \in L$ satisfying 
$$
\|\vv-\vr\| =\min \left\{\|\vz-\vr\|~:~\vz  \in L\right\}.
$$
\end{lemma}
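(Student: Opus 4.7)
The plan is to invoke the Voronoi-cell-based CVP algorithm of Micciancio and Voulgaris~\cite{MicVou} and to verify that for fixed dimension $s$ its running time becomes polynomial in the bit length of the input.

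First, I would precompute the Voronoi cell
$$
V(L) = \{\vec{x} \in \R^s : \|\vec{x}\| \le \|\vec{x}-\vz\| \text{ for all }\vz \in L\}
$$
of $L$ via its list of \emph{Voronoi-relevant vectors} --- the (at most $2(2^s-1)$) nonzero lattice vectors $\vw$ whose perpendicular bisector with the origin contains a facet of $V(L)$. These are determined recursively: for each nonzero coset $\vec{t} \in L/2L$ one solves CVP for $\vec{t}$ with respect to the scaled sublattice $2L$, and the shortest of the resulting closest vectors yields the relevant vectors of $V(L)$. A careful organisation of the recursion terminates in $2^{O(s)}$ arithmetic operations on lattice vectors of polynomially bounded bit size.

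Second, given $V(L)$, I would apply the \emph{iterative slicer} to the target $\vr$: repeatedly replace the current residual $\vec{y}$ by $\vec{y}-\vw$ for some relevant $\vw$ with $\|\vec{y}-\vw\| < \|\vec{y}\|$, until no such $\vw$ exists, at which point the residual lies in $V(L)$ and the accumulated lattice vector is the closest point to $\vr$. A standard geometric argument shows that the squared residual norm is at least halved after $O(s)$ reductions, so the slicer terminates after a number of iterations polynomial in $\log\|\vr\|$, each iteration being a scan over the $2^{O(s)}$ relevant vectors.

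Finally, one checks that all intermediate lattice vectors and residuals have bit length polynomially bounded in $\sum_{i=1}^s (\log \|\vb_i\| + \log|r_i| + 1)$, so that every arithmetic step costs polynomially in that quantity. For fixed $s$ the $2^{O(s)}$ combinatorial factors collapse into a constant, yielding the claimed polynomial running time. The main obstacle, handled in~\cite{MicVou}, is the correctness and bit-complexity of the recursive Voronoi cell construction: one must show that the $2^s-1$ sub-CVP calls in $2L$ do collectively determine all relevant vectors of $V(L)$ and that intermediate lattice vectors do not suffer bit-size blow-up through the recursion.
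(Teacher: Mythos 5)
Your proposal correctly identifies and sketches the Micciancio--Voulgaris Voronoi-cell algorithm for exact CVP, which is precisely the result the paper invokes; the paper itself gives no proof of this lemma and simply cites~\cite{MicVou}. Your outline of the relevant-vector enumeration, the iterative slicer, and the $2^{O(s)}$-to-constant collapse for fixed $s$ matches the content of that reference, so the approach is the same.
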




\subsection{Lattices and polynomial approximations}
\label{sec:Lap Poly}

 For  $t_1, \ldots, t_d \in \F_p$, integer $n> k\ge 1$, we set $m = n+1-k$ and
we denote by $\cL_{k,n,p}\(t_1, \ldots, t_d\)$ the $d+m$-dimensional lattice
generated by the rows of the following $(d +m)\times (d+m)$-matrix
\begin{equation}
\label{eq:matrix}
 \(\begin{array}{lllllllll}
 p & 0 & \ldots &  0   & 0&  \ldots  & 0\\
 0 & p & \ldots &  0   &0&  \ldots  & 0\\
 \vdots & {} & \ddots& {} &\vdots & { } & \vdots\\
 0 & 0 &  \ldots & p &  0 & \ldots & 0  \\
 t_1^{k} & t_2^{k} &  \ldots & t_d^{k} & 2\Delta/p & \ldots & 0 \\
  t_1^{k+1} & t_2^{k+1} &  \ldots & t_d^{k+1} & 0& \ldots & 0 \\
 \vdots & {} & { } & {} & \vdots & \ddots & \vdots\\
 t_1^{n} & t_2^{n} & \ldots & t_d^{n} &   0& \ldots & 2\Delta/p\\
 \end{array}\).
\end{equation}

The following result is a  generalization of
several previous results of similar flavour, see~\cite{BV1,Shp1,ShpWint}.

\begin{lemma}
\label{lem:Latt-Pol}
Let $\delta>0$ be fixed and 
let $p$ be a sufficiently large prime.
Assume that some 
real numbers $D \ge \Delta \ge 1$ and integer numbers  $h\ge 1$, $k \ge 1$ satisfy 
$$
 D \le  \min\left\{ p^{1-\delta}, h^k p^{-\delta} \right\}.
$$
Then there is an integer $d$, depending only on $n$ and $\delta$, 
such that if 
$$
f(X) = \sum_{j=k}^n a_j X^{j} \in \F_p[X]
$$
and  $t_1, \ldots, t_d \in [-h,h]$ are
chosen uniformly and independently at random, 
then with
 probability at least $1- 1/p$ the following holds. 
For any vector $\vs = (s_1, \ldots, s_d, 0, \ldots, 0)$ with
$$
\dmod{f(t_i) - s_i}{p} \le \Delta, \qquad i=1, \ldots, d,
$$
all vectors
$$
\vv = (v_1, \ldots, v_{d}, v_{d+1}, \ldots, v_{d+m}) 
  \in \cL_{k,n,p}\(t_1, \ldots, t_d\)
$$
satisfying
\begin{equation}
\label{eq:vi si D}
\|\vv - \vs\|\le  D,
\end{equation}
are of the form
$$
\vv = \( f(t_1), \ldots,  
f(t_d),  a_{k}\Delta/p, \ldots, a_n\Delta/p\). 
$$
\end{lemma}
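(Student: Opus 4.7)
The plan is to show that the lattice $\cL_{k,n,p}(t_1, \ldots, t_d)$ contains a natural target vector $\vw$ encoding $f$ and lying within $O(\Delta)$ of $\vs$, and then that with probability at least $1 - 1/p$ over the random $t_i$'s, no other lattice vector lies within distance $D$ of $\vs$. First, for integer shifts $c_i^*$ chosen so that $|f(t_i) - c_i^* p - s_i| \le \Delta$, the vector
$\vw = (f(t_1) - c_1^* p, \ldots, f(t_d) - c_d^* p, a_k \Delta/p, \ldots, a_n \Delta/p)$
is an integer combination of the rows of~\eqref{eq:matrix} and so lies in the lattice, with $\|\vw - \vs\| \le \sqrt{d+m}\,\Delta = O(\Delta)$ since $m \le n+1$ and $d$ will be a constant depending only on $n$ and $\delta$.

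Next, for any lattice vector $\vv$ with $\|\vv - \vs\| \le D$, its unique decomposition against the basis~\eqref{eq:matrix} yields a polynomial $g(X) = \sum_{j=k}^n b_j X^j \in \Z[X]$ together with shifts $c_i \in \Z$; let $\bar h \in \F_p[X]$ be the reduction of $g - f$ modulo $p$. From the triangle inequality $\|\vv - \vw\| \le D + O(\Delta) = O(D)$, the first $d$ coordinates of $\vv - \vw$ force $\dmod{\bar h(t_i)}{p} \le O(D)$ for every $i$. If $\bar h \equiv 0$, then $g \equiv f \pmod p$ and the size constraint coming from $\|\vv - \vs\| \le D$ applied to the last $m$ coordinates (whose diagonal block structure directly reads off the $b_j$) pins down $b_j = a_j$ in the canonical representation, after which the first $d$ coordinates of $\vv$ are also forced to coincide with those of $\vw$. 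So the proof reduces to showing that with high probability no nonzero $\bar h$ can satisfy $\dmod{\bar h(t_i)}{p} \le O(D)$ for all $i$.

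To control this probabilistic event, I apply Lemma~\ref{lem:NFIJ-1} to each nonzero $\bar h$ of degree $\ell \in \{k, \ldots, n\}$ with $\cI = \{-h, \ldots, h\}$ (so $H \asymp h$) and $\cJ$ an interval of length $O(D)$ around $0 \pmod p$ (so $K \asymp D$). Under the hypotheses $D \le p^{1-\delta}$ and $D \le h^k p^{-\delta}$, the quantities $K/p$ and $K/H^\ell$ are both bounded by $p^{-\delta}$ (the latter using $\ell \ge k$), and since $\kappa(\ell) \le \kappa(n)$ the lemma provides a uniform upper bound $\rho = p^{-\delta/(2\kappa(n)) + o(1)}$ on the fraction of $t \in [-h, h]$ that are bad for $\bar h$, independently of which $\bar h$ is chosen. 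Since the $t_i$ are independent uniform samples, the probability that all $d$ of them are bad for a fixed $\bar h$ is at most $\rho^d$; a union bound over the fewer than $p^m$ nonzero candidates yields total failure probability at most $p^m \rho^d$, which falls below $p^{-1}$ once $d$ exceeds a suitable constant multiple of $\kappa(n)(n+1)/\delta$---a constant depending only on $n$ and $\delta$, as required. The main obstacle is exactly this union bound: it closes only because $\rho$ is polynomial-independent, which is precisely why the two hypotheses on $D$, each controlling one of the two terms in Lemma~\ref{lem:NFIJ-1}, are both needed simultaneously.
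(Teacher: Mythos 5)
Your proposal follows essentially the same route as the paper's proof: apply Lemma~\ref{lem:NFIJ-1} to bound the probability that a fixed candidate polynomial $g\ne f$ stays within $O(D)$ of $f$ (mod $p$) at a random $t\in[-h,h]$, union-bound over the fewer than $p^{n+1-k}$ candidates to choose $d$ depending only on $n$ and $\delta$, and then argue that a lattice vector within $D$ of $\vs$ must have its last $m$ coordinates reading off the correct residues $a_j$ and its first $d$ coordinates equal to the canonical values $f(t_i)$. The only cosmetic difference is that you route the triangle inequality through the target vector $\vw$, whereas the paper compares $\vv$ directly to $\vs$ via $\dmod{g(t_i)-s_i}{p}\ge \dmod{f(t_i)-g(t_i)}{p}-\dmod{s_i-f(t_i)}{p}>2D-\Delta\ge D$; both variants work.
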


\begin{proof} 
Let $\cP_{k,f}$ denote the set of $p^{n+1-k}-1$ polynomials  
\begin{equation}
\label{eq:g-tilde}
g(X) = \sum_{j=k}^n b_j X^{j} \in \F_p[X]
\end{equation}
with $g \ne f$.

We see from Lemma~\ref{lem:NFIJ-1} (taking into account that 
$\kappa(\ell)$ is a monotonically non-decreasing function of 
$\ell$) that
for any polynomial $g \in \cP_{k,f}$
the probability $\rho\(g\)$ that 
$$
\dmod{f(t) - g(t)}{p}  \le 2D
$$
for $t \in [-h,h]$ selected uniformly at random is
$$
\rho\(g\) =   h^{o(1)} \((D/p)^{1/2\kappa(n)} + (D/h^k)^{1/2\kappa(n)}\) \le p^{-\eta}
$$
provided that $p$ is large enough, where $\eta> 0$ depends only on $\delta$ and $n$.

Therefore, for any  $g \in \cP_{k,f}$,
\begin{equation*}
\begin{split}
\Pr\left[\exists i\in [1,d]~:~\dmod{f(t_i)-g(t_i)}{p} >   2D\right]
= 1 - \rho\(g\)^d\ge 1 -p^{-d \eta}, 
\end{split}
\end{equation*}
where the probability is taken over $t_1, \ldots, t_d \in \F_p$
chosen uniformly and independently at random.

Since $\# \cP_{k,f} < p^{n-k+1} \le p^n$,  we obtain 
\begin{equation*}
\begin{split}
\Pr\left[\forall  g\in \cP_{k,f}, \ 
\exists i\in [1,d]~:~\dmod{f(t_i)-g(t_i)}{p} >  2D\right]&\\
\ge 1 - p^{n -\eta d} > 1 & - p^{-1}  
\end{split}
\end{equation*}
for $d > (n+1) \eta^{-1}$, 
provided that $p$ is large enough.

We now fix some integers $t_1, \ldots, t_d$ with
\begin{equation}
\label{eq:Cond}
\min_{g \in \cP_{k,f}} \  \max_{i\in [1,d]}
 \dmod{f(t_i)-g(t_i)}{p} >  2D.
\end{equation}

Let $\vv \in \cL_{k,n,p}\(t_1, \ldots, t_d\)$ be a lattice point
satisfying~\eqref{eq:vi si D}. 

Clearly, for a vector $\vv\in \cL_{k,n,p}\(t_1, \ldots, t_d\)$,
there are some  integer $b_k, \ldots, b_n,z_1,\ldots,z_d$ such that
$$
\vv = \(\sum_{j=k}^n b_j t_1^{j} - z_1 p,\ldots,\sum_{j=k}^n b_j t_d^{j} - z_d
p,b_k\Delta/p, \ldots, b_n\Delta/p\).
$$

Suppose that $b_j \not \equiv a_j \pmod p$ for some $j =k, \ldots, n$
and let $g$ be given by~\eqref{eq:g-tilde}.
In this case we have
\begin{equation*}
\begin{split} 
\|\vv - \vs\| &\ge  \max_{i \in [1,d]}
 \dmod{g(t_i) - s_i}{p} \\ 
&  \ge   \max_{i \in [1,d]} \(
\dmod{f(t_i) - g(t_i)}{p}  -  \dmod{s_i - f(t_i)}{p}\)      >   2D -  \Delta  \ge D
\end{split}
\end{equation*}
that contradicts~\eqref{eq:vi si D}.

Now, if  $b_j \equiv a_j \pmod p$, $j =k, \ldots, n$, then for all
$i=1,\ldots,d$ we have
$$
\left|\sum_{j=k}^n b_j t_i^{j} - z_i p\right| = \dmod{\sum_{j=k}^n b_j
t_i^{j}}{p} = g(t_i),
$$
since otherwise there
is 
$i\in [1,d] $ such that $|v_i - g(t_i)| \ge p$ and thus
$|v_i - s_i | \ge p - D$, which contradicts~\eqref{eq:vi si D} again. 
Hence $\vv$ is of the desired form.

As we have seen, the condition~\eqref{eq:Cond} holds with probability
exceeding $1- 1/p$ and the result follows.
\end{proof}

\section{Main Results}
 
\subsection{Polynomial interpolation problem}
\label{sec:Poly Int}

We are ready to prove the main result.
We follow the same arguments as in the proof 
of~\cite[Theorem~1]{BV1} that have also been 
used  for several other similar problems,
see~\cite{Shp1,Shp2,ShpWint}.

\begin{theorem}
\label{thm:PolAppr} 
Let $\varepsilon>0$ be fixed and 
let $p$ be a sufficiently large prime.
Assume that a  real $ \Delta \ge 1$ and  integers $p/2 > h\ge 1$, $k \ge 1$ satisfy 
$$
h^k >   \Delta p^{\varepsilon} \mand  \Delta < p^{1-\varepsilon}.
$$
There exists an integer $d$, depending only on $n$ and $\varepsilon$
and a  deterministic polynomial time algorithm $\cA$
such that for any polynomial
$$
f(X) = \sum_{j=k}^n a_j X^{j} \in \F_p[X]
$$
given integers $t_1, \ldots, t_d \in [-h,h]$ and 
$s_1, \ldots, s_d \in \F_p$ with
$$
\dmod{f(t_i) - s_i}{p} \le \Delta, \qquad i=1, \ldots, d,
$$
its output satisfies
$$
\Pr_{t_1, \ldots, t_d \in  [-h,h]} \left[
\cA\(t_1, \ldots, t_d; s_1, \ldots, s_d \)  = (a_k, \ldots, a_n)\right] \ge 1- 1/p
$$
if  $t_1, \ldots, t_d$ are
chosen uniformly and independently at
random from  $[-h,h]$.
\end{theorem}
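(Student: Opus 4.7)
The plan is to follow the standard Boneh--Venkatesan template used in~\cite{BV1,Shp1,ShpWint}: reduce recovery of $(a_k,\ldots,a_n)$ to a single CVP call in the lattice $\cL_{k,n,p}(t_1,\ldots,t_d)$ defined by~\eqref{eq:matrix}, and invoke Lemma~\ref{lem:Latt-Pol} to guarantee that the closest vector is forced to have the structural form dictated by $f$.

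Given the noisy samples $(t_1,s_1),\ldots,(t_d,s_d)$, the algorithm $\cA$ forms the target vector $\vs = (s_1,\ldots,s_d,0,\ldots,0)\in \Z^{d+m}$ and runs the deterministic CVP solver of Lemma~\ref{lem:CVP} on the lattice $\cL_{k,n,p}(t_1,\ldots,t_d)$, obtaining a lattice vector $\vv$ minimising $\|\vz-\vs\|$. Observe that the ``honest'' vector
$$
\vu = \bigl(f(t_1),\ldots,f(t_d),\ a_k\Delta/p,\ldots,a_n\Delta/p\bigr)
$$
(with $f(t_i)$ interpreted in $\{0,\ldots,p-1\}$) lies in $\cL_{k,n,p}(t_1,\ldots,t_d)$: it is produced by the integer combination $(a_k,\ldots,a_n)$ of the last $m$ rows of~\eqref{eq:matrix} together with suitable multiples of the first $d$ rows to reduce the $f(t_i)$ coordinates modulo $p$. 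Since $\dmod{f(t_i)-s_i}{p}\le\Delta$ for each $i$, one has $\|\vu-\vs\|\le \sqrt{d}\,\Delta$, and hence $\|\vv-\vs\|\le \sqrt{d}\,\Delta$ as well.

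To conclude I would apply Lemma~\ref{lem:Latt-Pol} with $D=\sqrt{d}\,\Delta$ and $\delta=\varepsilon/2$. Because $d$ depends only on $n$ and $\varepsilon$, for $p$ large enough we have $\sqrt{d}\le p^{\varepsilon/2}$, so
$$
D \le \Delta\,p^{\varepsilon/2}\le p^{1-\varepsilon/2}
\mand
D \le h^k p^{-\varepsilon/2},
$$
using the two hypotheses $\Delta<p^{1-\varepsilon}$ and $h^k>\Delta p^{\varepsilon}$. Lemma~\ref{lem:Latt-Pol} then asserts that with probability at least $1-1/p$ over the uniform random choice of $t_1,\ldots,t_d\in[-h,h]$, every lattice vector at distance $\le D$ from $\vs$ must equal $\vu$; in particular $\vv=\vu$. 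The algorithm then reads off $(a_k,\ldots,a_n)$ from the last $m$ coordinates of $\vv$ after multiplication by $p/\Delta$.

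The main obstacle has effectively been isolated into Lemma~\ref{lem:Latt-Pol}, whose proof uses Lemma~\ref{lem:NFIJ-1} to rule out ``shadow'' polynomials $g\ne f$ whose values on $[-h,h]$ could mimic those of $f$ to within $2D$; the delicate point is that the bound of Lemma~\ref{lem:NFIJ-1} is nontrivial precisely under the hypotheses $D\le p^{1-\delta}$ and $D\le h^k p^{-\delta}$, and this is exactly why the thresholds $\Delta<p^{1-\varepsilon}$ and $h^k>\Delta p^{\varepsilon}$ appear in the statement. Once the union bound over the $p^{n-k+1}$ competing polynomials is absorbed by choosing $d$ a sufficiently large constant (depending on $n,\varepsilon$), the bounded dimension $d+m$ makes the CVP call of Lemma~\ref{lem:CVP} deterministic polynomial time, and the overall algorithm inherits that complexity.
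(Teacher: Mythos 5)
Your proposal follows exactly the same route as the paper's own proof: build the target vector $\vs$ from the noisy samples, run the CVP solver of Lemma~\ref{lem:CVP} on $\cL_{k,n,p}(t_1,\ldots,t_d)$, observe that the ``honest'' lattice point $\vw_f$ is within distance $O(\Delta)$ of $\vs$, and then invoke Lemma~\ref{lem:Latt-Pol} with $\delta=\varepsilon/2$ to force the CVP output to equal $\vw_f$ with probability $\ge 1-1/p$. One small slip: you bound $\|\vu-\vs\|\le\sqrt{d}\,\Delta$, but the last $m=n+1-k$ coordinates of $\vu$ equal $a_j\Delta/p$ (with $0\le a_j<p$) while those of $\vs$ are zero, so each of those coordinates also contributes up to $\Delta$ and the correct bound is $\sqrt{d+m}\,\Delta$; since $d+m$ is still a constant depending only on $n$ and $\varepsilon$, this changes nothing in the subsequent verification that $D\le\min\{p^{1-\delta},h^kp^{-\delta}\}$.
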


\begin{proof}  We refer to
the first $d$ vectors in  the
matrix~\eqref{eq:matrix} as $p$-vectors and we refer to the other $m=n+1-k$ vectors 
as power-vectors.

Let us consider the vector $\vs =(s_1, \ldots, s_d, s_{d+1}, \ldots, s_{d+m})$
where
$$
s_{d+j} =0, \qquad j =1, \ldots, m.
$$
Multiplying the  $j$th power-vector  of the
matrix~\eqref{eq:matrix}
 by $a_j$ and subtracting a
 certain multiple of the $j$th $p$-vector, $j =k, \ldots, n$, we obtain a lattice point
\begin{equation*}
\begin{split} 
{\vw}_{f}&=  (w_1,\ldots,w_{d+m}) \\
& = (f(t_1), \ldots, f(t_d), 
a_1\Delta/p, \ldots, a_m\Delta/p)  \in \cL_{k,n,p}\(t_1, \ldots,
t_d\), 
\end{split}
\end{equation*}
and thus 
$$
\dmod{w_i - s_i}{p} \le  \Delta,  \qquad i = 1, \ldots, d+m. 
$$

Therefore,
\begin{equation}
\label{eq:wi si}
\| \vw - \vs\| \le  \sqrt{d+m} \Delta.
\end{equation}
Now we can use Lemma~\ref{lem:CVP}  to find in polynomial time
time a lattice vector
$\vv = (v_1,\ldots,v_d,v_{d+1},  \ldots, v_{d+m} )\in  \cL_{k,n,p}\(t_1, \ldots, t_d\)$ 
such that
$$
\|\vv- \vs\|
 =\min
\left\{\| \vz - \vs\| ~:~   \vz \in   \cL_{k,n,p}\(t_1, \ldots, t_d\)\right\} \le \| \vw - \vs\|.
$$
Hence, we conclude from~\eqref{eq:wi si} that
$$
\| \vw - \vv\| \le  2\sqrt{d+m} \Delta.
$$
Applying Lemma~\ref{lem:Latt-Pol} with $D = \sqrt{2(d+m)}\Delta$ and,
say $\delta = \varepsilon/2$) 
we see that $\vv={\vw}_{f}$ with probability
at least $1- 1/p$, and therefore the coefficients of  $f$ can be recovered in
polynomial time.
\end{proof}

\subsection{Polynomial approximation problem}
\label{sec:Poly Appr}

As we have mentioned, it is impossible to recover a ``complete'' polynomial 
of degree $n$ from the approximations~\eqref{eq:Approx}
for $t \in [-h,h]$ for a small value of $h$. 
However, it is interesting to estimate the number of possible 
``false'' candidates  $\tf \in \F_p[X]$, which for many 
$t\in [-h,h]$ take values close to those of $f$.

Namely, for a polynomial $f \in \F_p[X]$, 
and real positive parameter $\rho\le 1$ we define 
$\# \cM_f(\rho, h,\Delta)$ as the set of  polynomials $\tf \in \F_p[X]$
with 
$$
\dmod{\tf(t) - f(t)}{p} \le \Delta
$$
for at least $\rho (2h+1)$ values of $t \in [-h,h]$. 
For example, if for some fixed $\varepsilon$ we have
$$
h^{n} \ge   \Delta p^{\varepsilon} \mand  \Delta < p^{1-\varepsilon}, 
$$ 
the by Lemma~\ref{lem:NFIJ-1} we have $\cM_f(\rho, h,\Delta) = \emptyset$
unless $\rho \le  p^{-\varepsilon/2 \kappa(n)+o(1)}$.
So we now consider slightly smaller values of $h$.

\begin{theorem}
\label{thm:PolAppr-Imposible} 
Let $\varepsilon>0$ be fixed and 
let $p$ be a sufficiently large prime. 
Assume  real $ \Delta \ge 1$ and  integers $p/2 > h \ge 1$, $n \ge 1$ satisfy 
$$
h^{n-1} \le   \Delta p^{-\varepsilon} \mand  \Delta < p^{1-\varepsilon}, 
$$ 
for some fixed $\varepsilon > 0$. 
Then for $\rho \ge \max\{\Delta p^{-1}, h^{-1/2^{n-1}}\} p^{\varepsilon}$
we have 
$$
\# \cM_f(\rho, h,\Delta) \ll \rho^{-2^{n-1} -n(n^2 +1)/2}
 \Delta^{n+1} h^{(n^2-n+2)/2} p^{o(1)}. 
$$
\end{theorem}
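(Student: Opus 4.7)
Fix $\tilde f\in\cM_f(\rho,h,\Delta)$ and set $F=\tilde f-f\in\F_p[X]$, a polynomial of degree at most $n$. Since the map $\tilde f\mapsto F$ is a bijection once $f$ is fixed, it suffices to bound the number of admissible coefficient tuples $(A_0,\ldots,A_n)\pmod p$ of $F$. The case $\deg F<n$ is handled by the same argument with $n$ replaced by a smaller $\ell$; the resulting bound is strictly smaller (since $\rho,h^{-1},\Delta^{-1}\le 1$ appear with positive powers in the relevant shifts), and summing over $0\le \ell\le n$ costs only a constant depending on $n$, so I assume $\deg F=n$. After the affine change of variable $u=t+h+1$ and a shift of the value by $\Delta+1$, I pass to a polynomial $G(u)$ of degree $n$ with the same leading coefficient $A_n$ as $F$ (its remaining coefficients stand in an explicit linear bijection with those of $F$), satisfying
$$N_G(\cI,\cJ)\ge \rho H,\qquad \cI=\{1,\ldots,H\},\ \cJ=\{1,\ldots,K\},$$
with $H=2h+1$ and $K=2\Delta+1$. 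The hypothesis $h^{n-1}\le\Delta p^{-\varepsilon}$ gives~\eqref{eq:H K} for $p$ large, and $\rho\ge h^{-1/2^{n-1}}p^{\varepsilon}$ together with $H\ge 1$ guarantees $\rho H\ge p^{\varepsilon}\ge 2n+1$.

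I would then apply Lemma~\ref{lem:NFIJ-3} to $G$. The first alternative would force
$$\rho\ll\min\{K/p,\,H^{-1/2^{n-1}}p^{o(1)}\}\ll\max\{\Delta/p,\,h^{-1/2^{n-1}}\}p^{o(1)},$$
contradicting the hypothesis $\rho\ge\max\{\Delta p^{-1},\,h^{-1/2^{n-1}}\}p^{\varepsilon}$ for $p$ large. Hence the second alternative must hold: there are integers $v,u$ with $1\le v\le\rho^{-2^{n-1}}p^{o(1)}$, $|u|\ll h^{-n+1}\Delta$, and $A_nv\equiv u\pmod p$. Since $v<p$ one has $\gcd(v,p)=1$, so the pair $(v,u)$ determines $A_n\equiv uv^{-1}\pmod p$; consequently $A_n$ lies in a set of at most $O(\rho^{-2^{n-1}}h^{-n+1}\Delta)\,p^{o(1)}$ residue classes.

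Next I would apply Lemma~\ref{lem:NFIJ-2} to the same $G$, obtaining a common $1\le v'\ll \rho^{-n(n+1)/2}$ and integers $u'_0,\ldots,u'_n$ with $A_iv'\equiv u'_i\pmod p$ and $|u'_i|\ll\rho^{-n(n-1)/2}\Delta h^{n-i}$. I deliberately discard the (weaker) bound on $u'_n$ and retain only the constraints for $i=0,\ldots,n-1$. Each tuple $(v',u'_0,\ldots,u'_{n-1})$ determines $(A_0,\ldots,A_{n-1})\pmod p$ uniquely, and the number of such tuples is
$$\rho^{-n(n+1)/2}\prod_{i=0}^{n-1}O\!\left(\rho^{-n(n-1)/2}\Delta h^{n-i}\right)=O\!\left(\rho^{-n(n^2+1)/2}\Delta^n h^{n(n+1)/2}\right),$$
using the identity $(n+1)+n(n-1)=n^2+1$. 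Multiplying by the previous bound on the number of admissible $A_n$ and applying $n(n+1)/2-n+1=(n^2-n+2)/2$ produces exactly the stated bound.

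The only subtle step is how to combine the two lemmas. Using Lemma~\ref{lem:NFIJ-2} alone would inflate the $h$-exponent by $h^{n-1}$, because $|u'_n|$ there is bounded only by $\rho^{-n(n-1)/2}\Delta$ and not by the sharper $h^{-n+1}\Delta$ of Lemma~\ref{lem:NFIJ-3}; using Lemma~\ref{lem:NFIJ-3} alone controls only the leading coefficient. The correct move is to use Lemma~\ref{lem:NFIJ-3} exclusively to localise $A_n$ and Lemma~\ref{lem:NFIJ-2} exclusively to localise $(A_0,\ldots,A_{n-1})$; the two localisations (involving \emph{different} integers $v$ and $v'$) are logically independent constraints on disjoint coordinate projections of $(A_0,\ldots,A_n)$, so the counts multiply, producing the extra $\rho^{-2^{n-1}}$ surplus visible in the theorem.
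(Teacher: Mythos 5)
Your proof is correct and follows exactly the strategy of the paper's own argument: apply Lemma~\ref{lem:NFIJ-3} to bound the number of possible leading coefficients of $f-\tf$ by $\rho^{-2^{n-1}}\Delta h^{-n+1}p^{o(1)}$, apply Lemma~\ref{lem:NFIJ-2} to bound the number of possible tuples of the remaining coefficients by $\rho^{-n(n^2+1)/2}\Delta^n h^{n(n+1)/2}$, and multiply. You supply a few routine details that the paper glosses over (reduction to $\deg(\tf-f)=n$, the affine shift to the intervals $\{1,\ldots,H\}$ and $\{1,\ldots,K\}$, and the verification that the hypotheses~\eqref{eq:H K}, $N\ge 2\ell+1$, and the exclusion of the first alternative in Lemma~\ref{lem:NFIJ-3} all hold for $p$ large), but the essential combination of the two lemmas — including the decision to keep the sharper $h^{-n+1}\Delta$ bound on the leading coefficient from Lemma~\ref{lem:NFIJ-3} and to discard the corresponding $u'_n$ bound from Lemma~\ref{lem:NFIJ-2} — is the same as in the paper.
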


\begin{proof}  For every $\tf \in \cM_f(\rho, h,\Delta)$, we apply  
Lemma~\ref{lem:NFIJ-3} to estimate the number of 
possible values of the leading coefficient of $f-\tf$ as 
$\rho^{-2^{n-1}}\Delta h^{-n+1}p^{o(1)}$.
Then we also use Lemma~\ref{lem:NFIJ-2} to estimate the 
number of possible values for the other  coefficients  of $f-\tf$
as  
\begin{equation*}
\begin{split} 
\rho^{-n(n+1)/2} \prod_{i=0}^{n-1}  \rho^{-n(n-1)/2} \Delta h^{n-i}
& = \rho^{-n (n+1)/2 - n^2(n-1)/2}\Delta^{n} h^{n (n+1)/2}\\
& = \rho^{-n(n^2 +1)/2}\Delta^{n-1} h^{n (n+1)/2}
\end{split}
\end{equation*}
which concludes the proof. 
\end{proof}

The case of $\rho = 1$, when the recovery of $f$ from
the approximations~\eqref{eq:Approx}
with $t \in [-h,h]$  is impossible regardless of the 
number of queries and the complexity of algorithm 
is certainly of special interest. 
We denote  $\overline\cM_f(h,\Delta) =  \cM_f(1, h,\Delta)$.

Under the conditions of $h$ and $\Delta$ of Theorem~\ref{thm:PolAppr-Imposible} 
we have
$$
\overline\cM_f(h,\Delta) \ll  \Delta^{n} h^{n(n+1)/2} p^{o(1)}.
$$
Clearly if the coefficients of $\tf \in \F_p[X]$ are sufficiently 
close to those of $f$ then $\tf \in  \overline\cM_f(h,\Delta)$.
This argument easily leads to the lower bound
$$
 \overline\cM_f(h,\Delta) \gg  \Delta^{n+1} h^{-n(n+1)/2}.
$$

We now show that this is not necessary and in particular 
the set $ \overline\cM_f(h,\Delta)$ 
contains many polynomials that ``visually'' look very differently from $f$.

\section{Constructions of Exceptional Polynomials}

\subsection{Flat polynomials}
\label{sec:Flat Poly}

Let us take any integer $v$ and non-zero polynomial $\Psi_i(X) \in \Z[X]$
of degree $i=1, \ldots, n$ and with coefficients in the range $[0, v-1]$
such that 
$$
\Psi_i(t) \equiv 0 \pmod v, \qquad t = 0, \ldots, v-1.
$$
For examples if $v$ is a square-free integer, 
we can construct such a polynomial via the Chinese Remainder 
Theorem from polynomials of the form $\psi(X)(X^r-X) \in \F_r[X]$
for each prime divisor $r \mid v$. 

Alternatively, given an integer $v\ge 1$, for any $i$ with
\begin{equation}
\label{eq:v i!}
v \mid i!
\end{equation}
one can consider take $\Psi_i(X) = A_iX(X-1)\ldots (X-i+1)$
with an integer $A_i$ that is relatively prime to $v$. 
We note that the function $S(v)$, defined as the smallest $i$ with the 
property~\eqref{eq:v i!}
is called the {\it Smarandache function\/} and has been extensively studied
in the literature, see~\cite{Iv} and references therein.

Now, for any sufficiently small integers $u_i$ we define the coefficients 
$b_i$
from the congruences $b_i v \equiv u_i \pmod p$, $i =1, \ldots n$,
and consider the polynomial
$$
F(X) \equiv  \sum_{i=1}^n b_i\Psi_i(X) \in \F_p[X].
$$
It is easy to see that the coefficients of $F(X)$ 
can be rather large compared to $p$. Furthermore, 
for every $t \in [-h,h]$ as the fractions
$\Psi_i(t)/v$, $i =1, \ldots, n$, take integer values 
of size $O(h^i)$, we have 
$$
F(t)  \equiv  \sum_{i=1}^n u_i \frac{\Psi_i(t)}{v} \equiv U \pmod p,
$$
where 
$$
U \ll  \sum_{i=1}^n u_i h^i.
$$
Varying the parameter $v$, the polynomials $\Psi_i$ and the 
integers $u_i$, $i =1, \ldots n$, (and also introducing a small
non-zero constant coefficient $b_0$) one can get a large family 
of such polynomials $F$ that remain ``flat'' on short intervals. 
In particular, one can choose $v$ to be a product of many small
primes. 
This argument can easily  be made more precise with explicit 
constants, however we instead present an illustrative 
example of such a polynomial.

Let us fix the prime
\begin{equation}
\label{eq:prime}
\begin{split}
p=138501785460241506&76274172131557249442552\\
& 857086506417208905998552087,
\end{split} 
\end{equation}
We also set
$h=2^{31}$, $\Delta=\lfloor p/2^{33} \rfloor$
and  for $i=1,\ldots,5$,  choose integers $0< A_i < p$ such that
$A_i i! \equiv 1 \pmod p$:
\begin{align*}
A_1&=1\\
A_2&= 6925089273012075338137086065778624721276428543\\ 
& \qquad\qquad\qquad\qquad\qquad\qquad\qquad\qquad\qquad 253208604452999276044 \\
A_3&= 1154181545502012556356181010963104120212738090\\ 
& \qquad\qquad\qquad\qquad\qquad\qquad\qquad\qquad\qquad 5422014340754998793406\\
A_4&= 9810543136767106729027538593186385021808273769\\ 
& \qquad\qquad\qquad\qquad\qquad\qquad\qquad\qquad\qquad 608712189641748974395 \\
A_5&= 1962108627353421345805507718637277004361654753\\ 
& \qquad\qquad\qquad\qquad\qquad\qquad\qquad\qquad\qquad 921742437928349794879
\end{align*}
and pick numbers $c_1,\ldots,c_5$ randomly such that $0< c_i h^i/i! < \Delta$,
for example:
\begin{align*}
c_1 &=728236268016142987379676454561254599761666551820 \\
c_2 &=118901258278655898193398330486974890011 \\
c_3 &= 80243828316297659193667769559\\
c_4 &=177312506479764141124 \\
c_5 &= 210305526612,
\end{align*}
and let 
$$
F(X) = \sum_{i=1}^5 c_i A_i \prod_{k=0}^{i-1} (X-k).
$$

The  plot on Figure~\ref{fig:flat},  shows
that $F(x)<\Delta$ (where $\Delta$ is indicated by the horizontal line)
for $0\leq x<h$:
\begin{figure}[H]
\includegraphics[width=0.8\textwidth]{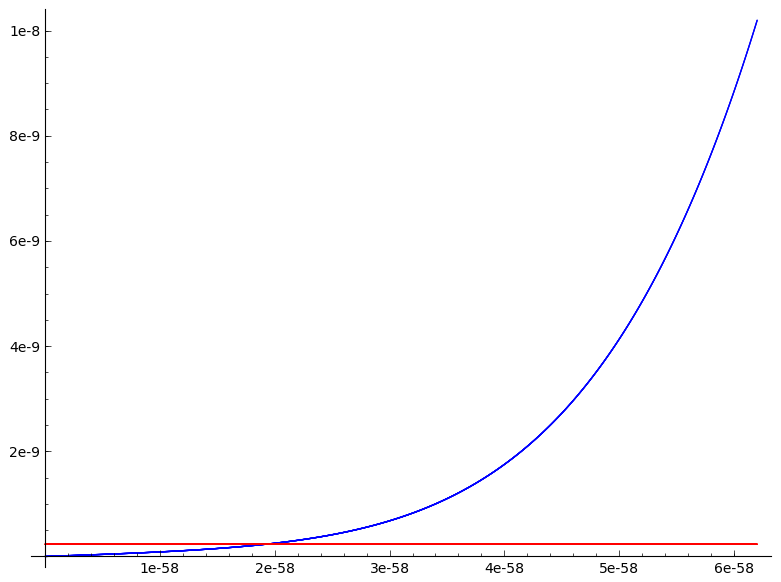}
\caption{
Plots of $F(x)/p$ versus $x/p$ for $0\leq x < 4 h$. }
\label{fig:flat}.
\end{figure}

The plot in Figure~\ref{fig:flat2} 
shows that $F(x)/p$ stays flat and close to zero for $x$ close to~0
and, after some transition period, appears to behave
completely randomly for $x \ge 300h$. 

\begin{figure}[H]
\includegraphics[width=0.8\textwidth]{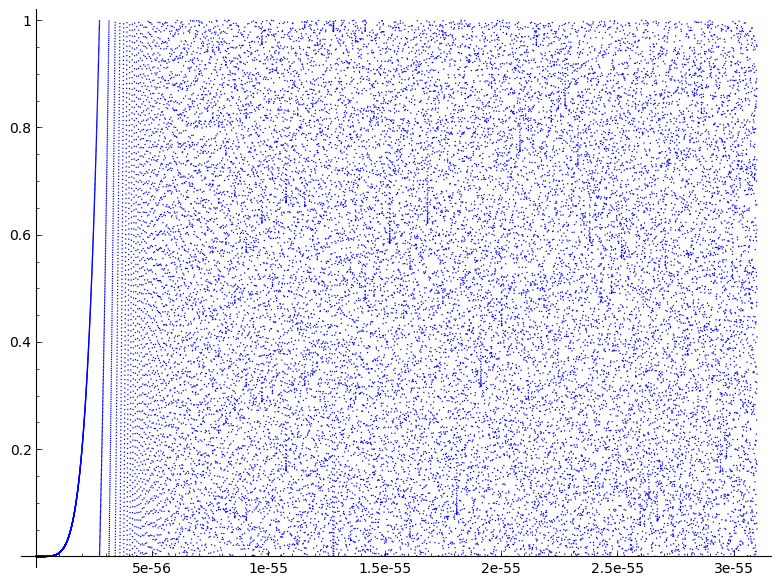}
\caption{
Plots of $F(x)/p$ versus $x/p$  for $0\leq x < 2000 h$. }
\label{fig:flat2}
\end{figure}

\subsection{Oscillating polynomials}
\label{sec:Oscill Poly}

A different type of polynomial with large coefficients that is small
in an interval can be constructed if $2^{n+1} \Delta > p$.
Thus this class is interesting only if the degree $n$ becomes a 
growing parameter, which is actually the case in the settings of HIMMO.

Let $f\in\Z [X]$ be polynomial of degree $n$ that, after reduction modulo $p$
takes values in the interval $[-\Delta,\Delta]$ for $x\in[-h,h]$. This means
that
$$ f(X) = d(X) + p c(X) $$
for integer valued functions $d$ and $c$, where for $x\in[-h,h]$, $d(x)\in
[-\Delta,\Delta]$.

Let $D$ denote the discrete difference operator, that is, for any function $A$,
$DA(x)=A(x+1)-A(x)$. We have, since $f$ is a polynomial of degree~$n$:
$$ 0=D^{n+1}f(x) = D^{n+1}d(x) + p D^{n+1} c(x),$$
so $D^{n+1}d(x) \in p\Z$.
On the other hand, since $d(x)\in[-\Delta,\Delta]$ for $x\in[-h,h]$, it holds
that
$D^{n+1}d(x)\in[-2^{n+1}\Delta,2^{n+1}\Delta]$ for $x\in[-h,h-n-1]$.
For $2^{n+1}\Delta < p$ it follows that $D^{n+1}D(x) =0$ for all
$x\in[-h,h-n-1]$,
so that after $n+1$-fold integration we have
$$
d(x) = \sum_{i=0}^n D^i d(0) \binom{x}{i}.
$$
This means that $d$ and $c$ are both polynomials of 
degree at most $n$ on the interval $[-h,h]$. 

For $2^{n+1}\Delta > p$ there are more possibilities. For instance, if 
$$ D^{n+1}d(x) = p (-1)^x \text{ for } x\in[-h,h-n-1],$$
we obtain, by integrating
\begin{align*}
 D^{n}d(x) &= D^n d(0) + \sum_{y=0}^{x-1} D^{n+1} d(y) = D^nd(0) +
p \frac{1-(-1)^x}{2}, \\
D^{n-1}d(x) &= D^{n-1} d(0) + \( D^n d(0) + p/2 \) \binom{x}{1}
-p \frac{1-(-1)^x}{4}, \\
&\vdots  \\
d(x) &= d(0) + \sum_{i=1}^n \( D^i d(0) - \(-\frac{1}{2}\)^{n+1-i} p
\) \binom{x}{i}\\
& \qquad \qquad \qquad \qquad \qquad  \qquad \qquad - (-1)^{n+1} p \frac{1 - (-1)^x}{2^{n+1}}.
\end{align*}
Defining 
$$ c(x) =  \sum_{i=0}^n \(-\frac{1}{2}\)^{n+1-i} \binom{x}{i}
- \(-\frac{1}{2}\)^{1+n} (-1)^x,$$
and $$f(x) = \sum_{i=0}^n D^id(0) \binom{x}{i},$$
we have a decomposition $f(x) = d(x) + p c(x)$, where, for integer $x$, these
three functions are integer valued. By choosing
$d(0)$ to be close to $0$ and, for $1\leq i\leq n$, 
$D^id(0)$ close to the residues of $\(-1/2\)^{1+n-i} p$ 
modulo $p$, we obtain a 
polynomial $f$ of degree~$n$ that, after reduction modulo~$p$, lies in
$[-\Delta,\Delta]$ for arguments in the interval $[-h,h]$. 
For the prime $p$ given by~\eqref{eq:prime}, with 
$n=5$, 
and
$$
d(0)=0 \mand D^id(0)\equiv\near{\(-1/2\)^{1+n-i} p} \pmod p, \ 1\leq i\leq 5,
$$
where $\near{\xi}$ denotes the closest integer (and defined arbitrary in 
case of a tie), 
we obtain the graph in Figure~\ref{fig:doubleflat}. Note that the left 
part of the graph in Figure~\ref{fig:doubleflat} 
looks like a plot of two curves, while in fact this is 
one curve, which exhibits a highly oscillatory behaviour. 
\begin{figure}[H]
\includegraphics[width=.9\textwidth]{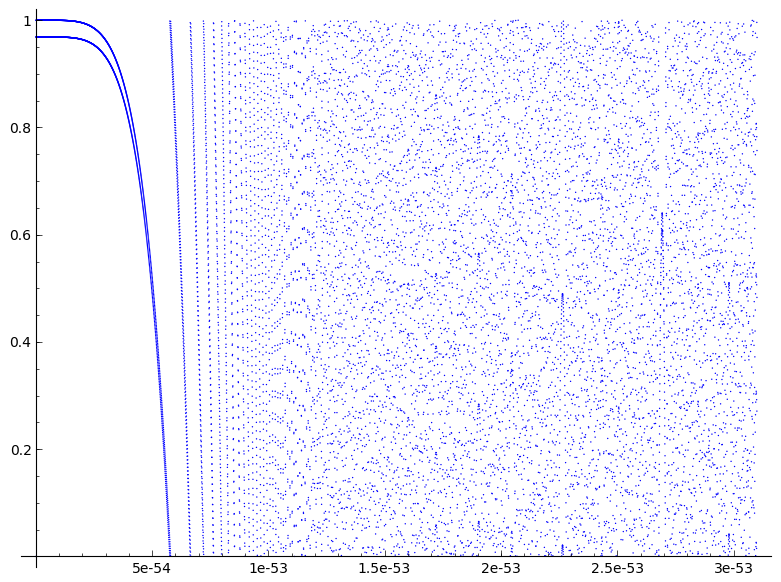}
\caption{Graph of $ f(x)/p$ versus $x/p$.}
\label{fig:doubleflat}.
\end{figure}

\section{Approximate recovery}
\label{sec:Poly App}

\subsection{Approach}

Using lattice reduction and rounding techniques, we are able to find a
polynomial $\widetilde{f}$ satisfying $\dmod{ u_t -
\widetilde{f}(t) }{p} \leq \Delta$ for
$t\in\{t_1,t_2,\ldots,t_d\}$, the set of observation points. The question is
whether $\widetilde{f}(t)$ approximates $f(t)$ also in all or at least 
some, non-observed points
$t\in[-h,h]\setminus\{t_1,\ldots,t_d\}$.
The lattice used in this technique is spanned by the rows of
$$
 \(\begin{array}{lllllllll}
 p & 0 & \ldots &  0   & 0&  \ldots  & 0\\
 0 & p & \ldots &  0   &0&  \ldots  & 0\\
 \vdots & {} & \ddots& {} &\vdots & { } & \vdots\\
 0 & 0 &  \ldots & p &  0 & \ldots & 0  \\
1 & 1&  \ldots & 1 & 2\Delta/p & \ldots & 0 \\
  t_1  & t_2  &  \ldots & t_d  & 0& \ldots & 0 \\
 \vdots & {} & { } & {} & \vdots & \ddots & \vdots\\
  t_1^{n} & t_2^{n} & \ldots & t_d^{n} &   0& \ldots & 2\Delta/p\\
 \end{array}\),
$$
where the first $d$ columns correspond to the evaluation of the polynomial
$\widetilde{f}$ in the points $t_1,\ldots,t_d$,
and the last $n+1$ columns to its coefficients, scaled such that all
coordinates of the wanted lattice point lie in an interval of length
$2\Delta+1$.
In the remainder of this section we focus on the first $d$ columns, that is, the
$d$-dimensional lattice $L$ of which the $d+n+1$~rows of the matrix
$$ \begin{pmatrix}
p & 0 & \cdots & 0 \\
0 & p & \cdots & 0 \\
\vdots & & \ddots & \vdots \\
0 & \cdots & 0 & p \\
1 & 1 & \cdots & 1 \\
t_1 & t_2 & \cdots & t_d \\
\vdots & \vdots & & \vdots \\
t_1^n & t_2^n & \cdots & t_d^n
\end{pmatrix} $$
are an overcomplete basis.

If $d\geq n+1$, a straightforward basis transformation that 
corresponds to Lagrange interpolation on $\Z_p$ eliminates $n+1$ rows and
transforms this basis into 
$$\begin{pmatrix}
p I_{d-n-1} & 0_{d\times(n+1)} \\
M_{(n+1)\times(d-n-1)} & I_{n+1}
\end{pmatrix},$$
from which it is clear that $\vol(L)=p^{d-n-1}$
(see Section~\ref{sec:Lat CVP} for the definition of $\vol(L)$).

We consider the case that $2^{n+1}\Delta < p$, so that, as shown in
Section~\ref{sec:Oscill Poly}
all functions $\widetilde{f}$ for which $\dmod{
\widetilde{f}(t)-f(t)}{p} \leq\Delta$ for all $t\in[-h,h]$
satisfy
$$ \widetilde{f}(t)-f(t)=\sum_{i=0}^n A_i \binom{t}{i}$$
for sufficiently small $A_0,\ldots,A_n$.
These differences correspond to the $n+1$-dimensional lattice $L_\text{approx}$
in $\Z^d$ of which the rows of
the matrix 
$$
B_\text{approx} = \begin{pmatrix}
1 & 1 & \cdots & 1 \\
t_1 & t_2 & \cdots & t_d \\
\begin{displaystyle} \binom{t_1}{2} \end{displaystyle}
 &\begin{displaystyle}  \binom{t_2}{2} \end{displaystyle}
 & \cdots & \begin{displaystyle} \binom{t_d}{2} \end{displaystyle}
 \\
\vdots & \vdots & & \vdots \\
\begin{displaystyle} \binom{t_1}{n} \end{displaystyle}
& \begin{displaystyle}  \binom{t_2}{n} \end{displaystyle}
 & \cdots & \begin{displaystyle} \binom{t_d}{n} \end{displaystyle}
\end{pmatrix}
$$ 
form a basis.
The volume of the lattice corresponding to these approximating polynomials is
given by
$$ \vol(L_\text{approx}) =  \sqrt{\det (B_\text{approx} B^T_\text{approx})}
$$ The determinant can be expanded as a sum of squares of
Vandermonde determinants, yielding
$$ \vol(L_\text{approx}) = 
\prod_{i=0}^n \frac{1}{i!} \times
\( \sum_{\substack{S\subseteq\{1,2,\ldots,d\} \\ \#S=n+1}}
\prod_{1\leq k < \ell \leq n+1} \( t_{S(k)} - t_{S(\ell)}\)^2 \)^{1/2},
$$ 
where the set $S=\{S(1),S(2),\ldots,S(n+1)\}$ runs over all
subsets of $\{1,2,\ldots,d\}$ with $n+1$ elements.

The lattice technique returns a polynomial $\widetilde{f}$ such that
the point
$$\left(\widetilde{f}(t_1)-f(t_1)
\ldots,\widetilde{f}(t_d)-f(t_d)\right)$$ lies in the lattice~$L$.
If this pointis also a lattice point of $L_\text{approx}$, then $\widetilde{f}$ approximates $f$ in all points of $[-h,h]$, and the attack succeeds.
So, from the attacker point of view, it is good if all lattice vectors of
$L$ that do not lie in $L_\text{approx}$ are long, that is, have the infinity
norm 
greater than $\Delta$, since then those points lie outside the hypercube around
the target vector.

We define the lattice $L^\perp$, obtained by orthogonally
projecting all points of $L$ onto
the hyperplane through the origin that is orthogonal to the basis vectors of
$L_\text{approx}$.
Obviously it holds that
$$ \vol(L^\perp) = \frac{\vol(L)
}{\vol(L_\text{approx})
}.$$
If $L^\perp$ does not have short vectors, its volume is large and the attack
succeeds. If its volume is small, $L^\perp$ has short vectors and, depending on
the number of independent short vectors and their lengths, the attack is likely
to fail.

This leads us to define the function
$$ S(n,h,p,\Delta,d) =
\log\( \frac{1}{d\Delta} \(
\frac{\vol(L)}{
\sqrt{\mathbb{E}_h [ \vol(L_\text{approx})^2 ] }}\)^{1/(d-n-1)}
 \).$$
It compares an esimate of the length of the shortest vector in
$L^\perp$, based on the Minkowski bound, to $\Delta$, in an average
sense. The cross-over between $L^\perp$ having short vectors or not
is at $S=0$.

The factor $1/d$ in the definition of the function $S$
is included to account for the orientations of $L_0$ and
of $L^\perp$ with respect to the coordinate axes, and the averaging is done in
such a way that it can be explicitly evaluated if
$d$ is much smaller that $h$ but is not too small:
\begin{align*}
\mathbb{E}_h &[ \vol(L_\text{approx})^2 ] =
\left( \prod_{i=1}^n \frac{1}{i!} \right)^2 \times \\
& \quad \frac{1}{(2h+1)^d}
\sum_{t_1,\ldots,t_d=-h}^h
\sum_{\substack{S\subseteq\{1,\ldots,d\} \\ \# S=n+1}}
\prod_{1\leq k < \ell \leq n+1}
(t_{S(k)} - t_{S(\ell)})^2 \\
&=
\left( \prod_{i=1}^n \frac{1}{i!} \right)^2 
\frac{\binom{d}{n+1}}{{(2h+1)}^{n+1}}
\sum_{t_1,\ldots,t_{n+1}=-h}^h
\prod_{1\leq k < \ell \leq n+1}
(t_k - t_\ell)^2 \\
&\approx
\left( \prod_{i=1}^n \frac{1}{i!} \right)^2 
\binom{d}{n+1} (2h)^{n(n+1)} \\
&\qquad  \idotsint\limits_{[0,1]^{n+1}} \prod_{1\leq
k<\ell\leq n+1} (x_k-x_\ell)^2 \,\mathrm{d}x_1\cdots\mathrm{d}x_{n+1}.
\end{align*}
The integral of the square of the Vandermonde determinant is equal to $(n+1)!$
times the determinant of the $(n+1)\times(n+1)$ Hilbert matrix
$H_{i,j}=1/(i+j-1)$, which has been calculated by Hilbert himself~\cite{Hilb}
and is equal to
$$
\det((H_{i,j})_{i,j=1}^{n+1}) =
\prod_{i=1}^n (i!)^4/\prod_{i=1}^{2n+1} i!.
$$
We thus obtain
\begin{equation}
\label{eq:formulaS}
\begin{split}
S(n,h,p&,\Delta,d) \approx \log\left(\frac{p}{d\Delta}\right) \\
&{}+\frac{1}{2(d-n-1)} \biggl( \sum_{i=1}^{n} (\log((n+1+i)!)-
\log(i!)) \\
&\qquad\qquad {} - \log\binom{d}{n+1} -n(n+1)\log(2h)  \biggr).
\end{split}
\end{equation}

\subsection{Tests}
We are mainly interested in parameter values corresponding to HIMMO, that is, when
we have a polynomial of degree $n$ and $b$-bit keys, we choose $p$ to have
precisely $(n+2)b$ bits and let  $\Delta=\lfloor p/2^{b+1}\rfloor$. We are
interested only in reconstruction in intervals of length at most $2^b$.
Note that in this case $S$ becomes independent of $p$.

Figure~\ref{fig:reconstruction} shows the reconstriction error
$({f}(t)-\widetilde{f}(t))/p$ versus $x/2^b$ for a randomly chosen polynomial
$f$ of degree $n=5$, $h=2^{15}$, 
$p$ a random~112-bit prime and $d=20$ and $23$,
respectively. The $d$ observation points are uniformly chosen in the interval
$[0,2^{16})$. For $d=20$, the error appears to be random for all $x$, whereas for $d=23$ the error is close to $0$ for small $x$.

\begin{figure}[H]
\centerline{%
\includegraphics[width=.45\textwidth]{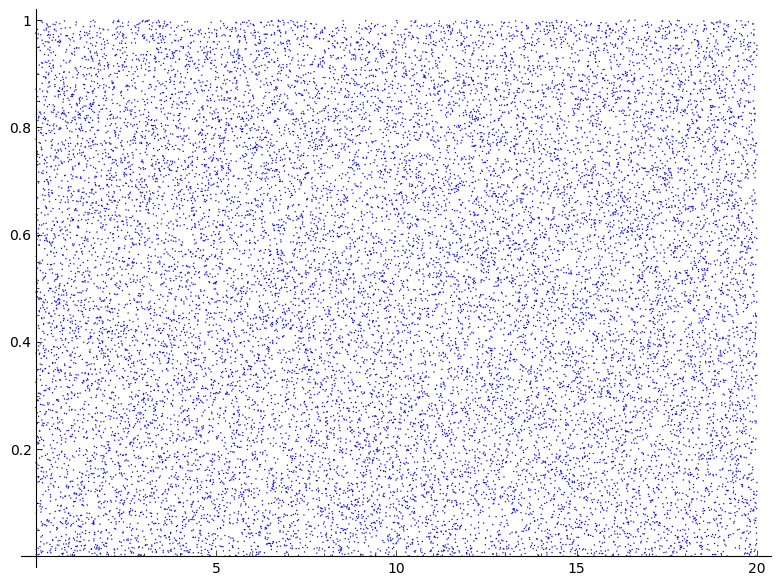}\hfil
\includegraphics[width=.45\textwidth]{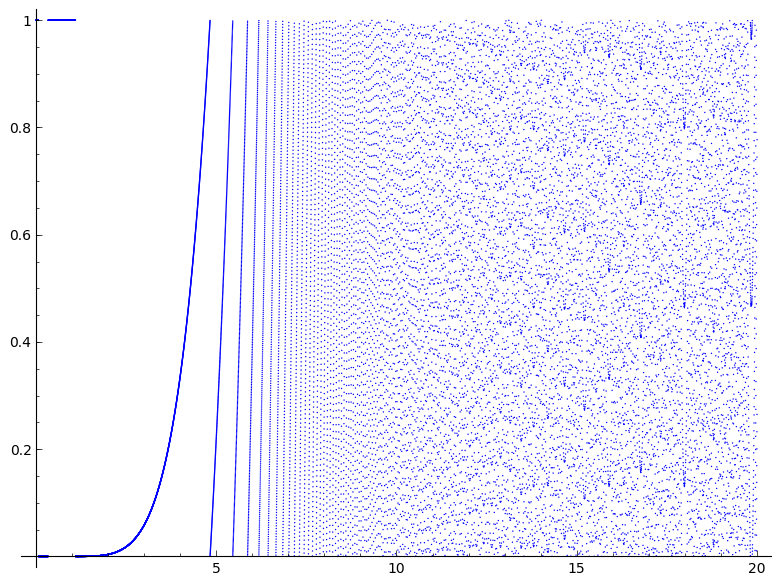}}
\caption{
Reconstruction error $({f}(t)-\widetilde{f}(t))/p$ versus
$x/2^b$ for $n=5$, $b=16$, $p$ a
random~112-bit prime and $d=20$ and $23$, respectively. The $d$ observation
points are uniformly chosen in the interval $[0,2^16)$.}
\label{fig:reconstruction}
\end{figure}

Figure~\ref{fig:graphS} shows graph of $S(5,2^{15},p,\lfloor p/2^{17}\rfloor,
d)$ as a function of $d$, showing that $S$ becomes positive at $d=23$.
This suggests that an approximate reconstruction is likely to be successful for
$d=23$, completely in line with our experimental results.
\begin{figure}[H]
\centerline{%
\includegraphics[width=.9\textwidth]{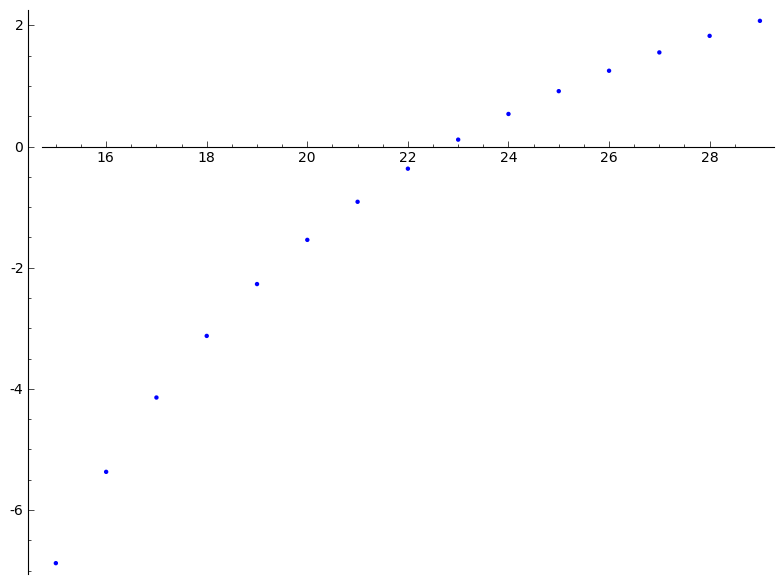}
}
\caption{%
The function $S(5,2^{15},p,\lfloor p/2^{17}\rfloor, d)$ as a function
of $d$ for large $p$.}
\label{fig:graphS}
\end{figure}

For larger degree $n=26$, and $h=2^{31}$, $p$ a $896$-bit prime, $\Delta=
\fl{p/2^{33}}$, the function $S$ becomes positive for $d=426$, which may be too large for
a successful lattice attack on the present computers. But in a shorter interval,
$h=128$, the function  $S$ becomes positive at $d=73$, see Figure~\ref{fig:S69-75}. 
\begin{figure}[H]
\centerline{%
\includegraphics[width=0.8\textwidth]{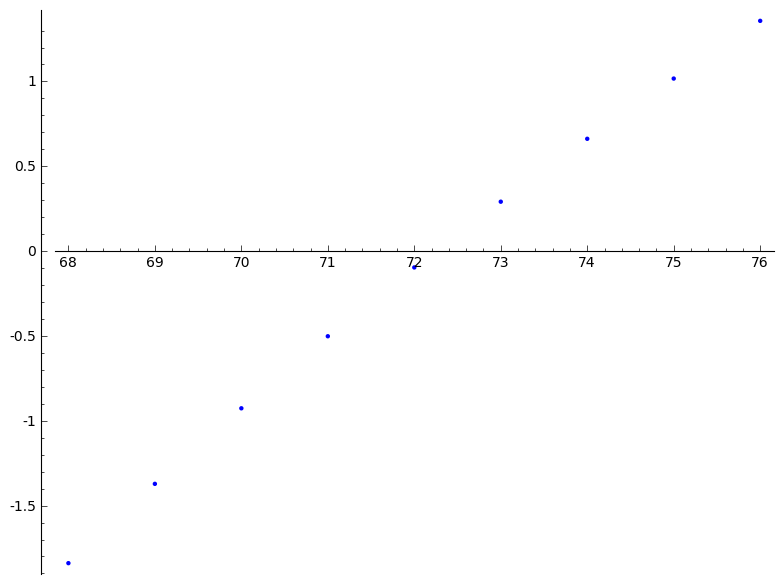} 
}
\caption{The function $S(26,2^{31},p,\fl{p/2^{33}},d)$ as a function
of $d$ for large $p$.}
\label{fig:S69-75}
\end{figure}

The experimental results in
Figures~\ref{fig:reconstructionsshort69}, ~\ref{fig:reconstructionsshort72}
and~\ref{fig:reconstructionsshort75} show how the reconstruction improves as
$d$ increases from $69$ to $75$.
For $c=69$, the reconstruction
is close to zero only in the observation points (note that the points 
near the horizontal line $y=1$ are also considered to be  close to zero). 
For $c=72$ also in a sizeable
fraction of the not-observed points. Finally for $c=75$ in nearly all points of an
interval of length $256$.

\begin{figure}[H]
\centerline{%
\includegraphics[width=0.8\textwidth]{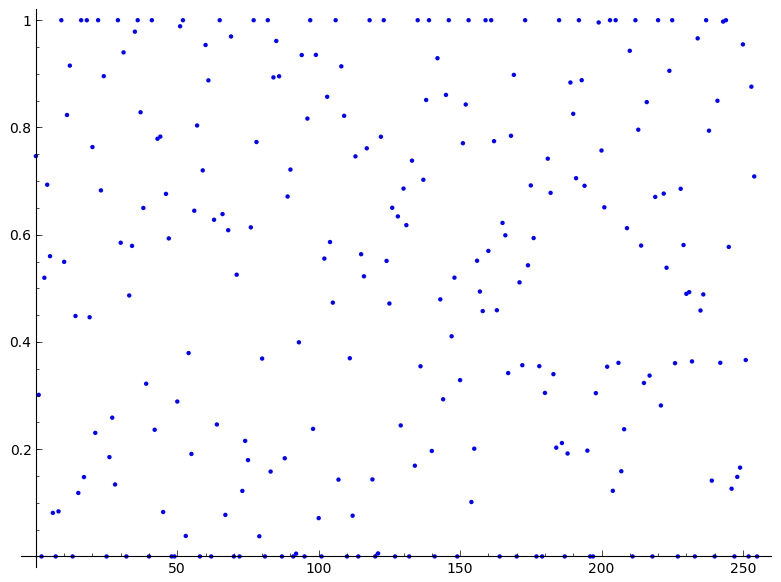} 
}
\caption{Reconstruction error for $0\leq x < 256$ with $\alpha=26$, $b=32$, 
$w=256$ for $d=69$.}
\label{fig:reconstructionsshort69}
\end{figure}

\begin{figure}[H]
\centerline{%
\includegraphics[width=0.8\textwidth]{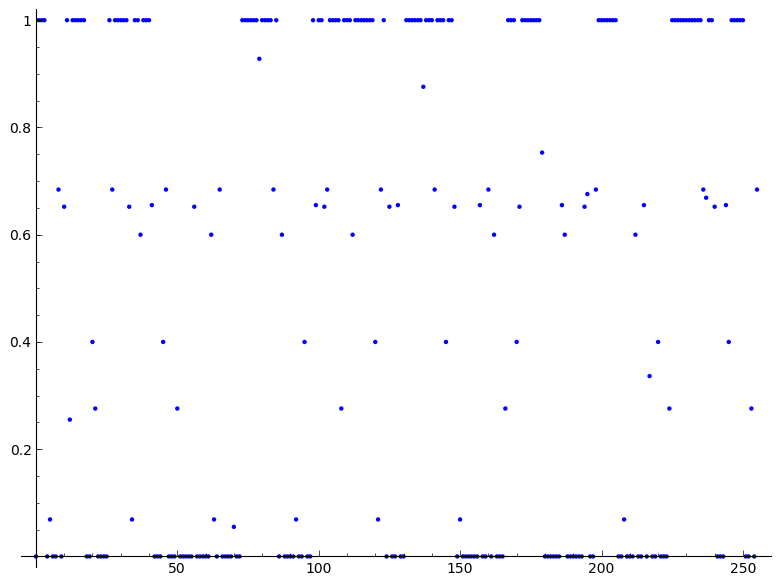}
}
\caption{Reconstruction error for $0\leq x < 256$ with $\alpha=26$, $b=32$, 
$w=256$ for $d=72$.
}
\label{fig:reconstructionsshort72}
\end{figure}

\begin{figure}[H]
\centerline{%
\includegraphics[width=0.8\textwidth]{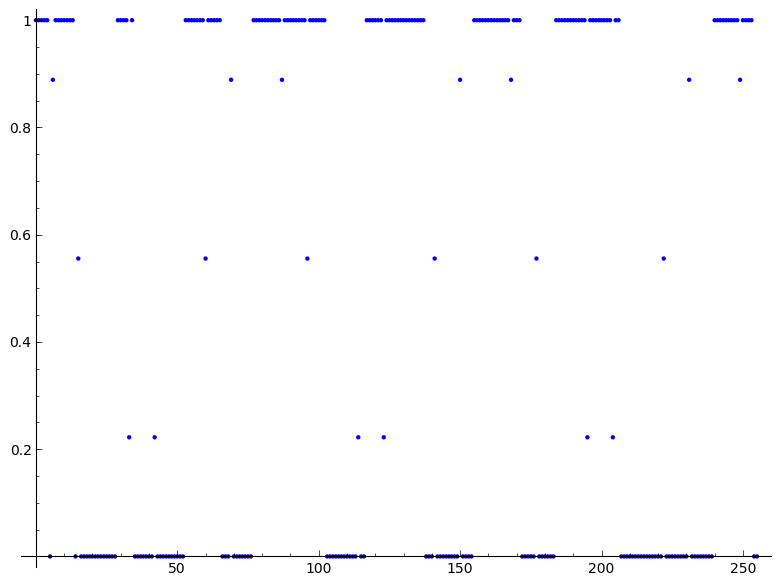}
}
\caption{Reconstruction error for $0\leq x < 256$ with $\alpha=26$, $b=32$, 
$w=256$ for $d=75$.}
\label{fig:reconstructionsshort75}
\end{figure}

Finally, we note that for fairly small values of $b$, for example, $b=8$, $S$ is
negative for all $d<2^b$ if $n$ is large enough, as shown for $n=10$ in
Figure~\ref{fig:plotSsmallb}.
This suggests that approximate reconstruction in the full interval $[0,255]$
cannot work, no matter how many observation points are used.

\begin{figure}[H]
\centerline{%
\includegraphics[width=.8\textwidth]{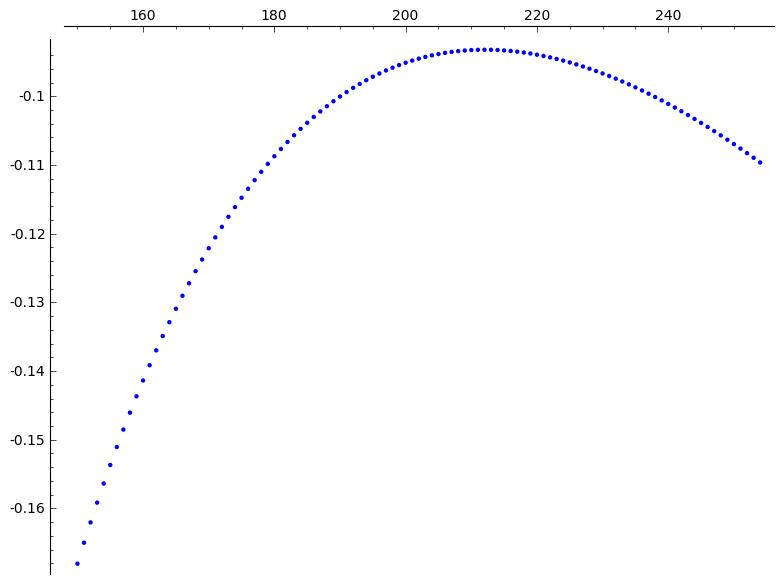}}
\caption{$S(10,2^7,p,\lfloor p/2^9\rfloor,d)$ as a function of $d$.}
\label{fig:plotSsmallb}
\end{figure}

\section{Comments}

Clearly in practical implementations of our algorithm, 
using a certain approximate version  of Lemma~\ref{lem:CVP}  is more natural.
One can easily check that the algorithm and the result of Theorem~\ref{thm:PolAppr} 
remain valid in this case as well.

In the settings of HIMMO with $b$-bit keys and $b$-bit identifiers
we have $h = 2^{b-1}$,  $2^{(n+2)b-1} < p < 2^{(n+2)b}$ and  
$\Delta=\lfloor p/2^{b+1} \rfloor$, see~\cite{GRTGGM},
so Theorem~\ref{thm:PolAppr} does not apply. This can be considered as
an indirect confirmation of the strength of HIMMO: recovery of the complete
polynomial is impossible.

From studying~\eqref{eq:formulaS} as a function of $d$ for various values of
$b$, we can obtain indications if an approximate recovery in an interval
$[-h,h]$ is likely to succeed, also for shorter intervals, $h<2^{b-1}$, that do
not contain all possible identifiers.

We also recall that there is a different approach to the hidden 
number problem, due to Akavia~\cite{Aka}. This approach does not 
use any lattice algorithms but rather examine the Fourier coefficients 
of the ``hidden'' linear function. However, the properties of these coefficients
(large values near the origin and a smooth decay away from the origin) 
do not hold for non-linear polynomial functions, where all Fourier coefficients
are expected to be of about the same size.

\end{document}